\newcommand{\po}{\ar@{}[dr]|{\text{\pigpenfont R}}}
\newcommand{\pb}{\ar@{}[dr]|{\text{\pigpenfont J}}}
\author{Tarje Bargheer}
\title{String Topology for Embedding Spaces}
\begin{document}
\maketitle
\section{Introduction}\label{intro}

Since its inception \cite{chassullivan}, string topology has been concerned with homotopical and algebraic structures associatied to the free loop space, $\Map(S^1,M)$, and sporadically higher dimensional variants of this \cite{PoHuMapping} \cite{GinotTradlerZeinalian}. The aim of this paper is to extend string topology to embedding spaces, also encompassing embedding spaces of higher dimensional spheres. This will be given by operadic maps of parametrised spectra

$$
\Fun(\coprod^k S^n,M) \times \Cleav_{S^n}(-;k) \to \colim_{C(\Gamma)}\Fun(S^n,M)^{TM(C(\Gamma))}.
$$

The exact nature of this morphism of parametrised spectra is presented in theorem \ref{globalmap}. The notation $\Emb(S^n,M)^{TM(C(\Gamma))}$ indicates that the colimit is taken over a sequence of spaces that up to homotopy are Thom spaces over the embedding space $\Fun(S^n,M)$ of embeddings from $S^n$ into $M$. The theorem hence provides a spectrum level version of higher dimensional string topology for embedding spaces. This foundational idea of having string topology presented through a spectra first appeared in \cite{homotopystring}. As we also specify in \ref{globalmap}, the above map of parametrised spectra leads to an action map on homology

$$
\h_p(\Fun(\coprod^k S^n,M)) \otimes \h_q(\Cleav_{S^n}(-;k)) \to \h_{p+q-\dim(M)(k-1)}(\Fun(S^n,M))
$$

\begin{figure}[h!tb]\label{almostintersectionfigure}
\includegraphics[scale=.8]{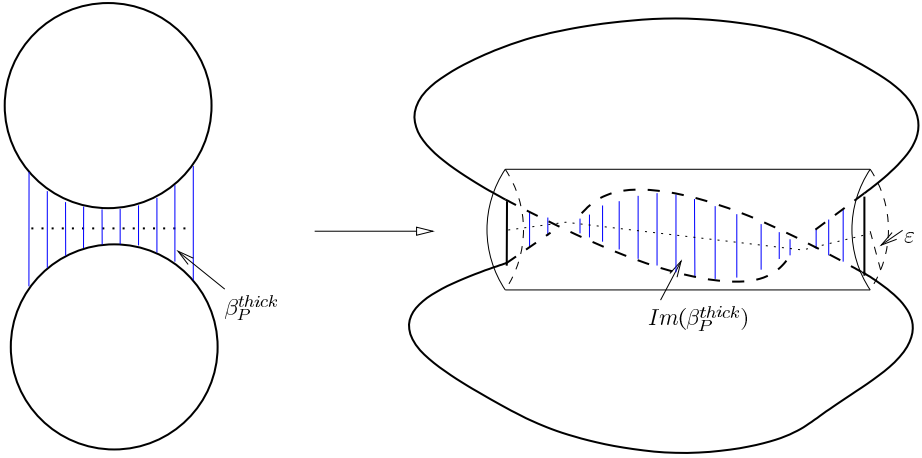}
\caption{On the left, two copies of $S^1$, connected by thin lines, representing a thickened blueprint $\beta^{\thick}_{P}$. As long as the connected portion of the embeddings of these are within a $\varepsilon$-neighborhood of each other, and one still obtains an embedding, they are connected to a single embedding, represented by the perimeter of the image on the right. The other cases are handled continously under the umkehr map we provide by the Thom collapse.}
\end{figure}

Roughly speaking, the Thom spaces in the spectral map are used to measure the proximity of embeddings along specified portions, called the blueprint, of the sphere in the domain. These Thom spaces are modelled over the tangent bundle $TM$ to assign the difference between two close points inside $M$. If they are far away from each other they are mapped to a point at $\infty$. This leans on the usual construction of Thom collapse maps for manifolds, however more complexity is added since we have a tangent bundle over every point of the domain of the embedding, and we furthermore need to account for several embeddings. We do this by exploiting the core intricacies of the cleavage operad as developed in \cite{TarjeCleavage}.

As promoted by for instance \cite[2.1]{LeinsterHigher}, symmetric monoidal categories are additional data for an action of an operad, and the action we provide of the cleavage operad is indeed not a symmetric monoidal category, meaning that we need to use the functor $\Fun(\coprod^k -,*)$. Had we $\Map$ instead of $\Fun$, the adjunction $\Map(\coprod^k -,*) \cong \prod^k\Map(-,*)$ would yield an action of a symmetric monoidal category.

While we do employ homotopy theoretic techniques, embedding spaces are inherintly more geometric. This also entails that our construction of umkehr maps, through a Thom collapse map, have a geometric nature to them. Umkehr maps in string topology has been considered as a consequence of Poincar\' e duality in for instance \cite{umkehrCohenKlein}. This is not the case for the geometry we utilise, and in principle, our construction does not need to assume that $M$ is compact. However, in order to prove that the associated structure is invariant under the choice of metric on $M$, we rely on compactness of $M$ \ref{shriekproperties}[D]. 

We use the final portion of the paper to show how string topology for embedding spaces and string topology for mapping spaces relate to each other. We do this through a homotopy commutative diagram of parametrised spectra:

$$
\xymatrix{
\Fun(\coprod^k S^n,M) \times \Cleav_{S^n}(-;k) \ar[r]\ar[d] & \colim_{C(\Gamma)}\Fun(S^n,M)^{TM(C(\Gamma))}\ar[d]\\
\prod^k\Map(S^n,M) \times \Cleav_{S^n}(-;k) \ar[r] & \colim_{C(\Gamma)}\Map(S^n,M)^{\widehat{TM(C(\Gamma))}}
}
$$

with the homotopy defect of the diagram given explicitly in the diagram (\ref{homotopydefect}). The spaces $\Map(S^n,M)^{\widehat{TM(C(\Gamma))}}$ in the lower-right corner of the square are a priori more complicated objects than the Thom spaces in the upper-right corner of the square. The core reason behind these more complicated spaces can be seen by considering the figure \ref{almostintersectionfigure} -- and allowing the two embeddings to start intersecting, as they are able to in mapping spaces.

While the proximity of two curves that intersect at $x$ are naturally $0$ at $x$, one can pointwise in $\prod^k\Map(S^n,M)$ do the same construction of the umkehr map as for embedding spaces. The problem with $0$ is however that it does not scale to anything but $0$ -- and this leads to continuity problems for the umkehr map. Therefore, $\Map(S^n,M)^{\widehat{TM(C(\Gamma))}}$ is given by disregarding these points of self-intersection. 

A priori, the above changes the homotopy type of the spaces. While we do not investigate it in this paper, we imagine that these spaces become increasingly difficult to handle as the dimension of the $S^n$ in the domain grows.

However, for the $1$-dimensional case, the subspaces of self-intersection are given by subsets of line-segments -- which are contractible as long as the maps in question are smooth. This provides the key for showing that $\Map(S^1,M)^{\widehat{TM(C(\Gamma))}}$ are homotopy equivalent to the Thom spaces one usually works with in string topology, as presented in \ref{dimension1nottoobad}.

There are a couple of homotopies along the outline described above. Homology is a homotopy invariant, and while they are very explicitly described on the level of parametrised specte, we can disregard them on homology -- which gives the morphism of BV-algebras $$\hh_*(\Emb(\coprod^- S^1,M)) \to \hh_*(\Map(\coprod^- S^1,M)$$ as described in \ref{bvalgebramor}. This morphism can be considered as inducing the string topology structure along from the inclusion $\Emb(S^1,M) \to \Map(S^1,M)$. 

We do not show anything specific in this paper, but it appears that as $n$ grows, the space $\Map(S^n,M)^{\widehat{TM(C(\Gamma))}}$ grows in complexity compared to the homotopy Thom space $\Map(S^n,M)^{TM(C(\Gamma))}$. We find it curious what structure one can precure on these more complicated spaces.

The ideas in this paper started to take form from discussions with Craig Westerland about the interplay between mapping- and embedding-spaces. Hopefully our discussions can be taken a few steps further from here. We are also grateful to Haynes Miller for his nudges to give a more geometric version of the action in string topology, and to Nathalie Wahl for several helpful discussions.




\section{The Cleavage Operad}
This section is an outline of the constructions and ideas surrounding the cleavage operad $\Cleav_{S^n}$. The details are given in \cite[Ch. 3]{TarjeCleavage}. A $k$-ary element $[T,\underline{P}] \in \Cleav_{S^n}(-;k)$ is prescribed by a binary, rooted planar tree, $T$, with an ordering of the leaves, and the internal knots decorated by $k-1$ affine oriented hyperplanes $\underline{P}$, i.e. an element of the $n$-plane $(n+1)$-Grassmanian together with a real number used for translation away from $0 \in \RR^{n+1}$, $\left(\Gr_n(\RR^{n+1}) \times \RR\right)^{k-1}$. This data is subjugated to cleaving conditions.

Conceptually, these cleaving conditions are a formal way of prescribing the recursive procedure of cleaving an object into $k$ pieces. That is, the hyperplane $P$ decorated on the internal knot closest to the root of $T$ are required to cleave $S^n$ into two closed submanifolds $V_1,V_2 \subset S^n$, representing outgoing colours of the operadic element. That is, $V_1 \cup V_2 = \left(\RR^{n+1} \setminus P\right) \cap S^n$ and $V_1 \cap V_2 \subset P$. The orientation of $P$ determines $V_1$ as the component in the direction of the normal-vector of $P$. The left-most of the branches of $T$ recursively prescribe how $V_1$ -- in place of $S^n$ -- is cleaved by the further hyperplane decorations, and the right-most how to cleave $V_2$. As further subsets of $V_1$ or $V_2$, the $k$-ary operation will hence have associated $k$ outgoing colours $N_1,\ldots,N_k \subset S^n$.

We finally apply a quotient that identifies any two tuples $(T,\underline{P})$ and $(T',\underline{P}')$ if they result in the same outgoing colours $N_1,\ldots,N_k$. Each of these hence represent the same element $[T,\underline{P}] \in \Cleav_{S^n}(-;k)$.

\begin{figure}[h!tb]
\includegraphics[scale=1]{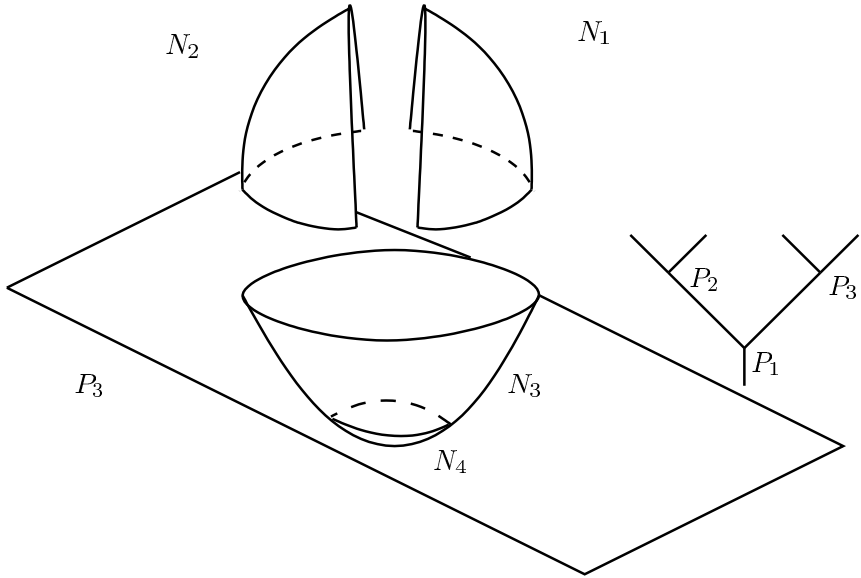}
\label{spherecleavepic}
\caption{A step in the recursive procedure for the cleavage operad of a $2$-sphere. The hyperplane $P_3$ cleaves the lower portion of the sphere into two parts $N_3$ and $N_4$, while the tree decorated by $P_1,P_2,P_3$ indicates that prior to this cleavage the sphere was first cut in two hemispheres by $P_1$, whereafter $P_2$ cut the upper hemisphere in yet two parts: $N_1$ and $N_2$}
\end{figure}

Since the unit disk $D^{n+1} \subset \RR^{n+1}$ is convex, the hyperplanes defining a cleavage of the unit disk $S^n \subset \RR^{n+1}$ can be extended to a cleavage of $D^{n+1}.$ This gives an embedding of operads $$U \colon \Cleav_{S^n} \to \Cleav_{D^{n+1}}.$$ 

The \emph{blueprint} is the subset $\beta_{[T,\underline{P}]} \subset D^{n+1}$ given by 
$$\beta_{[T,\underline{P}]} := \bigcup \partial U(N_i).$$ 
That is, the boundary $\partial U(N_i)$ is given by a subset of the union of the hyperplanes $\underline{P}$, and in this sense the blueprint is formed by the points of the hyperplanes that contribute to the cleaving process.

Dictated from the topology on the space of hyperplanes that bound the outgoing colour as subsets of $\RR^{n+1}$, there is a topology on the set of all outgoing colours, $\Ob(\Cleav_{S^n})$. This is defined in \cite[Ch 3.2]{TarjeCleavage}. An equivalence relation, chop equivalence, is imposed - equivalating two recursive cleaving procedures that yield the same outgoing colours. This specifies $\Cleav_{S^n}$ as a coloured topological operad \cite[Def. 2.6]{TarjeCleavage}. The space of outgoing colours have an action by $\Sigma_k$ permuting the ordering of the leaves on the indexing trees. In the above we have only specified $S^n$ as incoming colour; however any element of $\Ob(\Cleav_{S^n})$ can be obtained as incoming by taking a decorated sub-tree of $S \subset T$. The recursive procedure specified by the entire tree will give $W \in \Timber_{S^n}$ that $S$ is prescribed to cleave.

The main result of \cite[Th. 5.21]{TarjeCleavage} is that $\Cleav_{S^n}$ is a coloured $E_{n+1}$-operad. Since $\Timber_{S^n}$ is shown to be contractible \cite[Th. 3.17]{TarjeCleavage}, this means that under homotopy invariants actions of $\Cleav_{S^n}$ is precisely $E_{n+1}$-algebras.

To account for incoming colours, our notation is $\Cleav_{S^n}(V;k)$ for the space of $k$-ary cleaving operations with incoming colour $V$ and $k$ varying outgoing colours. The larger space where the input colours is also varying over $\Ob(\Cleav_{S^n})$ is denoted $\Cleav_{S^n}(-;k)$.

\section{Thickening of the Blueprint}

Using the inclusion $U \colon \Cleav_{S^n} \to \Cleav_{D^{n+1}}$, to any timber $N_i \subset S^n$, the corresponding $U(N_i)$ will have a well-defined centre of mass $c_i \in D^{n+1}$. To every point $s \in S^n$, the line from this point to $c_i$ will be denoted $l(s,c_i)$. As a geometric seed for our construction of the string topology action, we define a map

\begin{eqnarray}\label{alphamap}
\alpha_{[T,\underline{P}]} \colon \coprod_{i=1}^k \complement N_i \to \beta_{[T,\underline{P}]}
\end{eqnarray}

This is given by mapping $s \in \complement N_i$ to the point in $\beta_{[T,\underline{P}]}$ that lies on $l(s,c_i)$ and is on the boundary of $N_i$.

\begin{figure}[h!tb]
\includegraphics[scale=1]{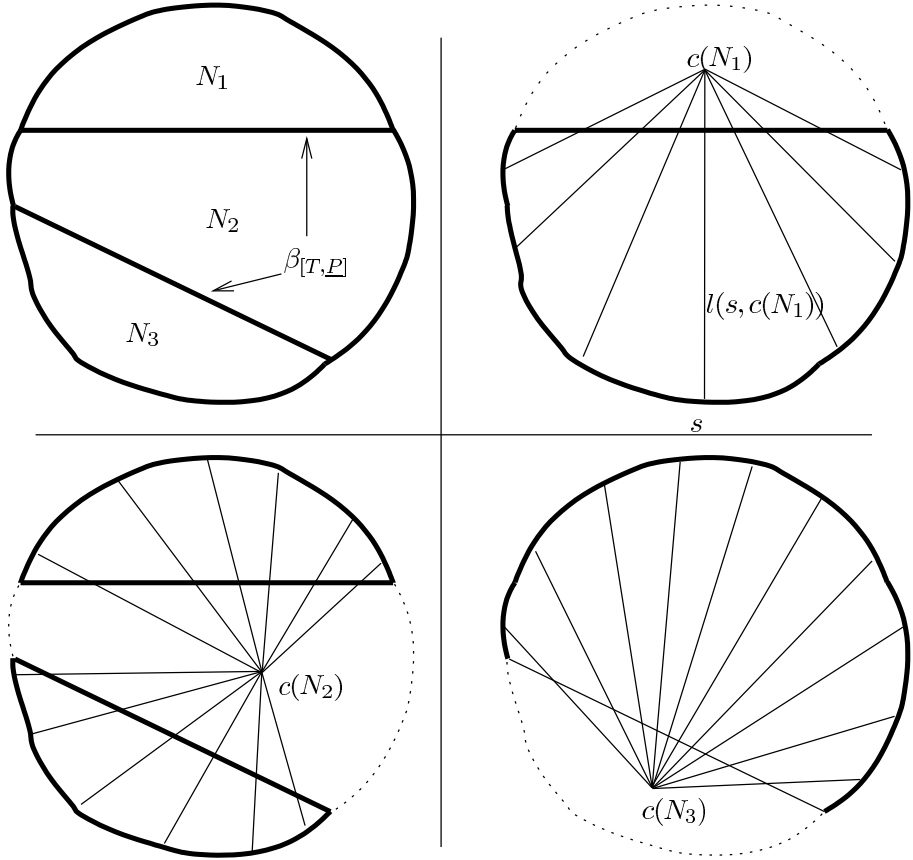}
\label{alphamappic}
\caption{The top left picture shows a cleavage of the disk. In the other three, where the lines intersect the blueprint is where the boundary of the complement of the outgoing is mapped to}
\end{figure}

\begin{proposition}
The map $\alpha_{[T,\underline{P}]}$ is well defined and injective when restricted to a single component of one of the complements $\complement N_i$.
\end{proposition}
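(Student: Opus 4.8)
The plan is to reduce everything to two elementary facts of convex geometry, after first recording the structural observation that each extended timber $U(N_i)\subset D^{n+1}$ is a compact convex body with nonempty interior. By construction of $U$, the recursive cleaving expresses $U(N_i)$ as the intersection of the convex disk $D^{n+1}$ with the finitely many closed half-spaces cut out by the hyperplanes of $\underline{P}$ occurring along the branch of $T$ leading to the $i$-th leaf; hence $U(N_i)$ is convex, and since the cleaving conditions force each timber to be genuinely $n$-dimensional, $U(N_i)$ has nonempty interior. Consequently its centre of mass $c_i$ lies in the \emph{interior} of $U(N_i)$, so in particular $|c_i|<1$ and $c_i\notin\partial U(N_i)$. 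I also use the evident identity $U(N_i)\cap S^n=N_i$, together with the fact that a flat face $U(N_i)\cap P$ of $\partial U(N_i)$ meets $S^n$ only inside $N_i$ (because $U(N_i)\cap P\cap S^n=N_i\cap P$).

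For well-definedness I fix $s\in\complement N_i$. Since $s\in S^n$ and $s\notin N_i$, the previous remarks show $s$ lies on neither the spherical part nor any flat part of $\partial U(N_i)$, nor in the interior, so $s$ is an exterior point of the closed convex body $U(N_i)$. The standard fact that the segment joining an interior point of a convex body to an exterior point meets the boundary in exactly one point then shows that the segment $l(s,c_i)$ meets $\partial U(N_i)$ in a single point, at some parameter $t_{0}\in(0,1)$ measured from $c_i$; this point is $\alpha_{[T,\underline{P}]}(s)$, and its uniqueness is precisely the asserted well-definedness (if one reads $l(s,c_i)$ as a full line rather than a segment, the same point is singled out by the clause that it lie between $s$ and $c_i$). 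To confirm the image lands in $\beta_{[T,\underline{P}]}$ and not on the spherical boundary, note that $t\mapsto|c_i+t(s-c_i)|^{2}$ is an upward parabola with value $<1$ at $t=0$ and value $1$ at $t=1$, hence with its other root negative, so the open segment from $c_i$ to $s$ is strictly inside the open unit ball; the point at parameter $t_{0}<1$ therefore lies strictly inside the ball, so it sits on one of the hyperplanes of $\underline{P}$, i.e.\ in the blueprint.

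For injectivity on a component, let $C$ be a connected component of $\complement N_i$ and suppose $s,s'\in C$ with $\alpha_{[T,\underline{P}]}(s)=\alpha_{[T,\underline{P}]}(s')=q$. Since $q\in\partial U(N_i)$ while $c_i$ is interior, $q\neq c_i$, so the segments $l(s,c_i)$, $l(q,c_i)$ and $l(s',c_i)$ all lie on one line $\ell$ through $c_i$, whence $s,s'\in\ell\cap S^n$. Because $c_i$ is strictly inside the unit ball, $\ell$ meets $S^n$ in exactly two points, one on each of the two rays of $\ell$ issuing from $c_i$. But $q$ lies strictly between $c_i$ and $s$ (parameter in $(0,1)$), and likewise strictly between $c_i$ and $s'$, so $s$ and $s'$ lie on the \emph{same} ray from $c_i$ and are therefore the same point of $\ell\cap S^n$; that is, $s=s'$.

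I expect the convex-geometry steps to be routine; the one point that genuinely needs care — and the main thing to extract cleanly from \cite{TarjeCleavage} — is the structural claim that $U(N_i)$ really is $D^{n+1}$ cut by closed half-spaces and is full-dimensional, since that is what makes $c_i$ interior and drives the rest. It is worth noting in the write-up that this argument in fact proves injectivity on all of $\complement N_i$ for each fixed $i$; the restriction to a single component in the statement is merely what the later constructions need, and the only failure of global injectivity of $\alpha_{[T,\underline{P}]}$ is across distinct indices $i$, where the blueprint is shared.
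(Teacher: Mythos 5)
Your proof is correct and follows the same convexity route as the paper's own proof: express the (extended) timber as an intersection of half-spaces to conclude convexity, place the centre of mass in its interior, and use uniqueness of the chord from an interior point to an exterior point. You are somewhat more careful in filling in details the paper elides --- the explicit exterior-point argument, the verification that the image of $\alpha_{[T,\underline{P}]}$ lands on a hyperplane rather than the spherical part of $\partial U(N_i)$, and the ray argument for injectivity --- and you correctly observe, as the paper's own proof in fact asserts, that the argument gives injectivity on all of $\complement N_i$, not merely on one component.
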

\begin{proof}
The outgoing colour $N_i$ is convex as it can be seen as the intersection $\bigcap_{j=1}^r D^{n+1}_{P_{j}}$ where $\{P_{j}\}$ is the set of hyperplanes bounding $N_i$ and $D^{n+1}_{P_j}$ is the portion of the disk cleaved in two by $P_j$ that has $D^{n+1}_{P_j} \cap (N_i)^{\circ} \neq \emptyset$. Since all $D^{n+1}_{P_j}$ are convex $N_i$ is also convex.

This means that $c_i \in N_i$ will be a point in the interior of $N_i$, and the line from $c_i$ to a point of the boundary $\partial N_i$ is uniquely defined. Extending this line will eventually hit any point of $\complement N_i$, since the line is unique, this shows that the map $\alpha_{[T,\underline{P}]}$ is injective restricted to $\complement N_i$. 
\end{proof}

In order to work with spaces of embeddings, it is essential to make the map $\alpha_{[T,\underline{P}]}$ injective as a map from the entire $\coprod_{i=1}^k \complement N_i$, not just on the components. This spurs a homotopical replacement $\beta_{[T,\underline{P}]}^{\thick}$ of the blueprint $\beta_{[T,\underline{P}]}$: 

Notice that $|\alpha_{[T,\underline{P}]}^{-1}(b)| = p+1$ for a point $b \in \beta_{[T,\underline{P}]}$ that sits where $p$ hyperplanes come together in $\beta_{[T,\underline{P}]}$. A single hyperplane will have two outgoing colours at either side of it, and each additional hyperplane will add another outgoing colour whose complement hits the point $b$ under the map $\alpha_{[T,\underline{P}]}$. 

To the $p$-simplex $\Delta^p$, we let the \emph{$i$-spine} $\spine^p[i]$ be the union of the $1$-simplices that has the $i$'th vertex of $\Delta^p$ as one of its vertices. Alternatively, $\spine^p[i]$ is the spine of the $i$'th horn $\Lambda^p_i \subset \Delta^p$. 

The homotopical replacement $\beta_{[T,\underline{P}]}^{\thick}$ will be given by replacing $b \in \beta_{[T,\underline{P}]}$ with $\coprod_{i=1}^{p+1} \spine^p[i]$ when $|\alpha^{-1}_{[T,\underline{P}]}(b)| = p+1$. 

The map $\alpha_{[T,\underline{P}]}$ maps $\complement N_i$ injectively into subset of $\beta_{[T,\underline{P}]}$. A point $b = \alpha_{[T,\underline{P}]}(a_{i_1}) = \cdots = \alpha_{[T,\underline{P}]}(a_{i_{p+1}})$ where $a_{i_{j}} \in \complement N_{i_j}$ and $i_1 < \cdots < i_{p+1}$ specifies for every $j \in \{1,\ldots,p+1\}$ the $j$-spine $\spine^p[j]$ at $b \in \beta_{[T,\underline{P}]}$.

We define the \emph{thickened blueprint} as a subset  
\begin{eqnarray}\label{thickblueprint}
\beta_{[T,\underline{P}]}^{\thick} \subset \coprod_{i=1}^k \beta_{[T,\underline{P}]} \times \spine^{k-1}[i].
\end{eqnarray}

This subset is obtained by noting that $p_b := |\alpha^{-1}_{[T,\underline{P}]}(b)| \leq k$ where $k$ is the arity of $[T,\underline{P}]$. The $i$-spine at every $b \in \beta_{[T,\underline{P}]}$ specifies a subset $\spine^{p_b} \subset \spine^{k-1}[i]$ and these subsets varying with $b$ specify the inclusion (\ref{thickblueprint}). 

An effect of the thickened blueprint is that we have an injective map

$$
\alpha_{[T,\underline{P}]}^{\thick} \colon \coprod_{i=1}^k \complement N_i \to \beta_{[T,\underline{P}]}^{\thick} 
$$

where for $x_i  \in \complement N_i$, the map is given by $\alpha_{[T,\underline{P}]}^{\thick}(x_i) = \left(\alpha_{[T,\underline{P}]}(x_i),\spine^{p_{\alpha_{[T,\underline{P}]}(x_i)}}\right)$ 


\section{Colliding and Evading Hyperplanes}\label{collidingandevadingsection} 

Let $\hat{C}(\Gamma) \subset \Cleav_{S^n}(-;k)$ denote the subset where $|\pi_0(\beta_{[T,\underline{P}]})| = \Gamma$.

We remark that $\hat{C}(0)$ is given by configurations of hyperplanes that form a single component $D^{n+1}$. At the other extreme, $\hat{C}(k-1) \simeq \Sigma_{k+1}$, in the sense that the space of hyperplanes not intersecting within $D^{n+1}$ has contractibly components, one for each labelling of the outgoing colours -- determined by $\Sigma_{k+1}$.

Since $C(\Gamma+1)$ is given by moving a hyperplane from $C(\Gamma)$ into a coulour of $\Cleav_{D^{n+1}}$, a convex subset of $D^{n+1}$, it follows that $C(\Gamma)$ has contractible components.

Consider the collision space $$C(\Gamma,\Gamma') := \hat{C}(\Gamma) \cap \overline{\hat{C}(\Gamma')}.$$ 

The closure is taken within $\Cleav_{S^n}$, meaning that the limit points in the closure will be where the amount of components of the blueprint decreases.

We let $C(\Gamma)$ be homeomorphic to the space $\overline{\hat{C}(\Gamma)}$ but let points $[T,\underline{P}]$ of $\overline{\hat{C}(\Gamma)} \setminus \hat{C}(\Gamma)$ be equipped with an \emph{evasive blueprint} $\beta_{\evade{[T,\underline{P}]}}^{\thick}$. We define the evasive blueprint as the limit of blueprints in $\hat{C}(\Gamma)$, meaning that for $[T,\underline{P}] \in \overline{\hat{C}(\Gamma)} \setminus \hat{C}(\Gamma)$, the blueprint will have components joined at a single point, where the corresponding components are separate in $\hat{C}(\Gamma)$, and a spine of a larger simplex $\Delta_x$ at the joined point. The limit inside $\hat{C}(\Gamma)$ will hence retain the amount of components of $\beta_{[T,\underline{P}]}$, and have the spine of a subsimplex of $\Delta_x$ over the points that are not joined in the limit.

We perform an iterated glueing of these spaces via a direct limit construction using correspondences  

\begin{eqnarray}\label{cleavingpushout}
\xymatrix{C(\Gamma) & C(\Gamma,\Gamma') \ar[l]^{\phi_{\in}} \ar[r]^{\phi_{\evade}} & C(\Gamma')}
\end{eqnarray}

as the basic building blocks. Here the maps $\phi_{\in}$ are inclusions into the boundary with $\phi_{\in}$ mapping to the standard thickened blueprint, and $\phi_{\evade}$ the evasive blueprint. Pullbacks of these correspondences form a category $\mathfrak{C}$ with the pullback space formed from $C(\Gamma),C(\Gamma')$ and $C(\Gamma'')$ being $C(\Gamma,\Gamma'')$. 

\begin{proposition}\label{colimitcleavage} 
$$
\colim_{\mathfrak{C}}\colim(\xymatrix{C(\Gamma) & C(\Gamma,\Gamma')\ar[l]\ar[r] & C(\Gamma')}) \cong \Cleav_{S^n}(-;k)
$$
\end{proposition}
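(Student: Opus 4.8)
The plan is to realise $\Cleav_{S^n}(-;k)$ as the colimit by producing mutually inverse continuous maps $F$ from the colimit to $\Cleav_{S^n}(-;k)$ and $G$ back. The structural input is that $\Cleav_{S^n}(-;k)$ is covered by the finitely many closed subspaces $\overline{\hat{C}(\Gamma)}$, $\Gamma=0,\dots,k-1$, where the blueprint has at most $\Gamma$ components, and that $C(\Gamma)$ is --- by the construction recalled in \cite[Ch.~3]{TarjeCleavage} --- literally the space $\overline{\hat{C}(\Gamma)}$ carrying the auxiliary evasive-blueprint decoration on the strata $\overline{\hat{C}(\Gamma)}\setminus\hat{C}(\Gamma)$ where components have merged. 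Forgetting this decoration gives a homeomorphism $f_\Gamma\colon C(\Gamma)\to\overline{\hat{C}(\Gamma)}$, which I regard as a map into $\Cleav_{S^n}(-;k)$ via the closed inclusion $\overline{\hat{C}(\Gamma)}\hookrightarrow\Cleav_{S^n}(-;k)$.

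First I would check that the $f_\Gamma$ form a cocone on the gluing diagram. For a span $C(\Gamma)\leftarrow C(\Gamma,\Gamma')\rightarrow C(\Gamma')$ with $C(\Gamma,\Gamma')=\hat{C}(\Gamma)\cap\overline{\hat{C}(\Gamma')}$ and legs $\phi_{\in},\phi_{\evade}$, both composites $f_\Gamma\circ\phi_{\in}$ and $f_{\Gamma'}\circ\phi_{\evade}$ are simply the subspace inclusion $\hat{C}(\Gamma)\cap\overline{\hat{C}(\Gamma')}\hookrightarrow\Cleav_{S^n}(-;k)$: the legs decorate the collision space with the standard, respectively evasive, blueprint, and the maps $f_\Gamma,f_{\Gamma'}$ then forget exactly that decoration while doing nothing to the underlying configurations. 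Hence the two composites agree, and (extending the cocone over the collision spaces and the $\mathfrak{C}$-pullbacks by the same recipe) the universal property of the colimit furnishes a continuous $F$; since each $f_\Gamma$ is onto $\overline{\hat{C}(\Gamma)}$ and these subspaces cover the target, $F$ is surjective.

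Next I would build $G\colon\Cleav_{S^n}(-;k)\to\colim$ by $G(z)=[\,f_\Gamma^{-1}(z)\,]$ for any $\Gamma$ with $z\in\overline{\hat{C}(\Gamma)}$, where the bracket is the class under the structure map $C(\Gamma)\to\colim$. Independence of the choice of $\Gamma$ is the one point that uses the correspondences: if $z\in\hat{C}(\Gamma'')$ with $\Gamma''\le\Gamma$, then $z$ lies in the collision space $C(\Gamma'',\Gamma)$, and along its span the point $f_\Gamma^{-1}(z)\in C(\Gamma)$, the image of $z$ under one leg, is glued in the colimit to $f_{\Gamma''}^{-1}(z)\in C(\Gamma'')$, its image under the other; running this for two indices $\Gamma,\Gamma'\ge\Gamma''$ gives $[f_\Gamma^{-1}(z)]=[f_{\Gamma''}^{-1}(z)]=[f_{\Gamma'}^{-1}(z)]$. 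Continuity of $G$ is the pasting lemma for the finite closed cover $\{\overline{\hat{C}(\Gamma)}\}_\Gamma$, since $G$ restricted to $\overline{\hat{C}(\Gamma)}$ equals $f_\Gamma^{-1}$ followed by the structure map. Then $FG=\mathrm{id}$ and $GF=\mathrm{id}$ hold on points because $f_\Gamma\circ f_\Gamma^{-1}=\mathrm{id}$, so $F$ is the asserted homeomorphism.

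The hard part is the structural input I have been leaning on from \cite[Ch.~3]{TarjeCleavage}. One needs that $C(\Gamma)$ is genuinely \emph{homeomorphic} to $\overline{\hat{C}(\Gamma)}$ --- that the topology placed on the evasive blueprints is precisely the subspace topology inherited from $\Cleav_{S^n}(-;k)$ --- and that the iterated colimit over $\mathfrak{C}$ coincides with the naive quotient $\coprod_\Gamma C(\Gamma)/\!\sim$ invoked above, which requires the evasive blueprint to be \emph{coherent}: restricting the ``limit-of-blueprints'' construction to a deeper stratum must agree with iterating it, so that the $\mathfrak{C}$-pullbacks really are the multiple-collision spaces and the gluing relation is transitive. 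I would also want to nail down the combinatorial facts that there are finitely many strata $\hat{C}(\Gamma)$ and that $\hat{C}(\Gamma'')\subseteq\overline{\hat{C}(\Gamma)}$ exactly when $\Gamma''\le\Gamma$, using the description recalled above of how a hyperplane is moved into a colour of $\Cleav_{D^{n+1}}$; these are what make the finite-closed-cover and pasting-lemma steps legitimate.
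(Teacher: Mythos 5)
Your proof is correct and follows essentially the same plan the paper uses --- cover $\Cleav_{S^n}(-;k)$ by the closed strata $\overline{\hat{C}(\Gamma)}$, identify each $C(\Gamma)$ with $\overline{\hat{C}(\Gamma)}$ up to the auxiliary blueprint decoration, and recover the original space from the gluing along the collision subspaces $C(\Gamma,\Gamma')$ --- but you make explicit the cocone/universal-property bookkeeping and the pasting-lemma construction of the inverse that the paper's one-paragraph argument leaves implicit. The structural inputs you flag at the end (that the topology on $C(\Gamma)$ really is the subspace topology of $\overline{\hat{C}(\Gamma)}$, and that the iterated $\mathfrak{C}$-colimit agrees with the naive stratum quotient, which needs coherence of the evasive-blueprint construction) are indeed used without comment in the paper, so it is right to call them out rather than treat them as free.
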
 
\begin{proof}
The inner colimit glues the subspace $C(\Gamma)$ and $C(\Gamma')$ together along the boundary prescribed by $C(\Gamma,\Gamma')$. Any $[T,\underline{P}]$ with $|\pi_0(\beta_{[T,\underline{P}]}| = \Gamma$ is in $C(\Gamma)$, and since $C(\Gamma,\Gamma')$ uniquely determines any limit of blueprints with different amounts of components, the outer colimit glue to a space homeomorphic to $\Cleav_{S^n}(-;k)$
\end{proof}

Note that in the colimit, there is a discrepancy as to what blueprint we assign to a cleavage; whether it is the evasive or ordinary blueprint. This will lead us to produce a stable string topology action of the entire $\Cleav_{S^n}(-;k)$ - whereas $C(\Gamma)$ can be made to act unstably, up to a fixed amount of suspensions dependent on $\Gamma$.

\section{Homotopy Thom Spaces}

To describe mapping spaces over any subspace $C \subseteq \Cleav_{S^n}(-;k)$, we do the following:
 
The disjoint union $\coprod_{[T,\underline{P}] \in C} \Map(\coprod_{i=1}^k \complement N_i,M)$ receives a map from $\Map(\coprod^k S^n,M) \times C$ given by $(f_1,\ldots,f_k,[T,\underline{P}]) \mapsto (\res(f_1,\ldots,f_k))_{[T,\underline{P}]}$. Let a basis for the target be given by the image of a basis for the domain. 

For $[T,\underline{P}] \in C$, we describe an inclusion $\beta^{\thick}_{[T,\underline{P}]} \hookrightarrow \left(\prod^{k-1} D^{n+1} \right)\times \left(\coprod^k \Delta^{k}\right)$ by noting that $\beta_{[T,\underline{P}]}$ will consist of at most $k-1$ components, and each of these components have a inclusion into $D^{n+1}$ along with a deformation retraction from $D^{n+1}$ onto that component. This describes an inclusion of $\beta_{[T,\underline{P}]}$ into $\prod^{k-1} D^{n+1}$, over each point in the thickened blueprint we have added at most $k$ disjoint spines as subsets of $\Delta^k$.

As an effect, the inclusion provides a surjective map 
$$\Map\left(\left(\prod^{k-1} D^{n+1}\right) \times \left(\coprod^k \Delta^k\right),M\right) \times C \to \coprod_{[T,\underline{P}] \in C} \Map(\beta_{[T,\underline{P}]},M).$$

We give the target of this map the quotient topology and denote it $\Map(\beta^{\thick},M)_{C}$.

Let $p \colon TM \to M$ denote the tangent-bundle. This induces a map 

$$
(p_*)_{\Cleav_{S^n}(-;k)} \colon \Map(\beta^{\thick},TM)_{\Cleav_{S^n}(-;k)} \to \Map(\beta^{\thick},M)_{\Cleav_{S^n}(-;k)}
$$

The map is however not a fibration; noting that for a fixed $([T,\underline{P}]\in \Cleav_{S^n}(-;k)$ with  $f \in \Map(\beta^{\thick}_{[T,\underline{P}]},M)$ the fiber over this point will be $\Map(\beta_{[T,\underline{P}]},\RR^{\dim(M)})$; since the connected components of $\beta_{[T,\underline{P}]}$ vary with varying $[T,\underline{P}] \in \Cleav_{S^n}(-;k)$, so restricting to the maps into the unit-sphere of the domain $\RR^{\dim(M)}$ would have non-homotopy equivalent fibers.

Note however that restricting to $C(\Gamma) \subset \Cleav_{S^n}(-;k)$, the blueprints $\beta_{[T,\underline{P}]}$ are all homotopy equivalent for $[T,\underline{P}] \in C(\Gamma)$. Restricting to these subspaces, we obtain the following:

\begin{proposition}\label{localfibration}
The map $p_*|_{C(\Gamma)}$ is a fibration.
\end{proposition}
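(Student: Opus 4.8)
The plan is to prove the fibration property locally over $C(\Gamma)$ and then globalise by Dold's local-to-global theorem: a map that restricts to a Hurewicz fibration over each member of a numerable open cover of its base is itself a Hurewicz fibration. Since the ``underlying cleavage'' projection $\Map(\beta^{\thick},M)_{C(\Gamma)} \to C(\Gamma)$ is continuous, it suffices to exhibit a numerable open cover $\{U_\lambda\}$ of $C(\Gamma)$ for which $p_*|_{C(\Gamma)}$ restricts to a fibration over each $\Map(\beta^{\thick},M)_{U_\lambda}$; these preimages then form a numerable cover of $\Map(\beta^{\thick},M)_{C(\Gamma)}$.

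The first and main step is to show that the family of thickened blueprints is locally trivial over $C(\Gamma)$: every $[T,\underline{P}] \in C(\Gamma)$ has an open neighbourhood $U$ together with a homeomorphism over $U$ between $\coprod_{[T',\underline{P}'] \in U} \beta^{\thick}_{[T',\underline{P}']}$ and $U \times \beta^{\thick}_{[T,\underline{P}]}$. This is exactly where the structure of $C(\Gamma)$ from \cite{TarjeCleavage} enters. On $\hat{C}(\Gamma)$ the combinatorial type of the cleavage and the number $\Gamma$ of components of the blueprint are locally constant, so a small deformation of the decorating hyperplanes moves the blueprint by an ambient isotopy, yielding such a trivialisation. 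At a point $[T,\underline{P}] \in \overline{\hat{C}(\Gamma)} \setminus \hat{C}(\Gamma)$ the evasive blueprint $\beta^{\thick}_{\evade{[T,\underline{P}]}}$ has been introduced precisely so that it remains homeomorphic, via a continuously varying homeomorphism, to the nearby thickened blueprints in $\hat{C}(\Gamma)$ — the spine of the enlarged simplex at the joined point being the bookkeeping that keeps this identification continuous across the collision stratum $C(\Gamma,\Gamma')$. Since the components of $C(\Gamma)$ are contractible, a local trivialisation can be spread over an entire component, and after passing to a numerable refinement if necessary we obtain the cover $\{U_\lambda\}$, with each blueprint family trivial over $U_\lambda$ with a common model fibre $X_\lambda$.

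Over such a $U = U_\lambda$ with model fibre $X = X_\lambda$, the definition of the mapping spaces by the quotient topology gives compatible homeomorphisms $\Map(\beta^{\thick},TM)_{U} \cong \Map(X,TM) \times U$ and $\Map(\beta^{\thick},M)_{U} \cong \Map(X,M) \times U$, under which $p_*$ becomes $(p_X)_* \times \mathrm{id}_U$, with $(p_X)_* \colon \Map(X,TM) \to \Map(X,M)$ induced by $p \colon TM \to M$. Now $TM \to M$ is a vector bundle, hence a Hurewicz fibration, and $\Map(X,-)$ preserves this: given a space $Y$, a homotopy $Y \times I \to \Map(X,M)$ together with an initial lift $Y \to \Map(X,TM)$ are, by the exponential law (valid since the blueprints are compact), the same data as a homotopy $(Y \times X) \times I \to M$ with an initial lift $Y \times X \to TM$, and this lift exists because $p$ has the homotopy lifting property with respect to $Y \times X$. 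Hence $(p_X)_*$, and with it the restriction of $p_*|_{C(\Gamma)}$ over $\Map(\beta^{\thick},M)_{U_\lambda}$, is a fibration, and Dold's theorem finishes the proof.

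The one genuinely nontrivial ingredient is the local triviality of the thickened blueprint family, in particular its behaviour along the evasive boundary; this is the reason the statement is confined to the strata $C(\Gamma)$ rather than all of $\Cleav_{S^n}(-;k)$, where the number of blueprint components jumps and $p_*$ already fails to be a fibration, and it rests entirely on the analysis of the cleavage operad and its collision spaces in \cite{TarjeCleavage}. Everything else in the argument is formal.
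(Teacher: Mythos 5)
Your proof is correct, but it routes through Dold's local-to-global theorem where the paper goes global immediately. The paper's own argument fixes a single $[T,\underline{P}] \in C(\Gamma)$, uses contractibility of the components of $C(\Gamma)$ to identify the whole blueprint family over a component with the constant family on $\beta^{\thick}_{[T,\underline{P}]}$, and then realizes $p_*|_{C(\Gamma)}$ as the pullback of the fibration $\Map(\beta^{\thick}_{[T,\underline{P}]},TM) \times C(\Gamma) \to \Map(\beta^{\thick}_{[T,\underline{P}]},M) \times C(\Gamma)$; fibrations are closed under pullback, so the result follows in one step. You instead produce a numerable cover $\{U_\lambda\}$ over which the blueprint family trivializes, check the fibration property over each piece via the exponential law and the HLP of $TM \to M$, and glue with Dold. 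The underlying ingredients coincide — triviality of the thickened blueprint family over $C(\Gamma)$ (with the evasive blueprints ensuring this extends across the collision strata) and the fact that $\Map(X,-)$ for compact $X$ carries $TM \to M$ to a fibration — but your version is more careful about what the trivialization actually is and where it comes from, at the cost of invoking Dold, while the paper's is shorter by leaning directly on contractibility of the components. Both are valid; yours would also cover the case where components of $C(\Gamma)$ were merely paracompact rather than contractible.
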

\begin{proof}
This follows by taking a fixed $[T,\underline{P}] \in C(\Gamma)$. Since $C(\Gamma)$ has contractible components we can form the pullback diagram
$$
\xymatrix{
\Map(\beta^{\thick},TM)_{C(\Gamma)}\ar[d]\ar[r]\pb & \Map(\beta^{\thick}_{[T,\underline{P}])},TM) \times C(\Gamma)\ar[d]\\
\Map(\beta^{\thick},M)_{C(\Gamma)}\ar[r] & \Map(\beta^{\thick}_{[T,\underline{P}]},M) \times C(\Gamma).
}
$$
The right-hand side is a fibration with fiber $\Fun(\beta^{thick},\RR^{\dim(M))}$. The desired morphism on the left side in the pullback is a fibration as well.
\end{proof}

The thickened blueprint over a point $b \in \beta_{[T,\underline{P}]}$ is given by a disjoint union $\coprod_{i=1}^{k} \spine^{p}[i]$, where $\spine^{p}[i] \subset \Delta^{k-1}$. We form the space $\beta_{[T,\underline{P}]}/\sim$ by identifying points that agree under the inclusion into $\Delta^{k-1}$. We let $\Fun_0(\beta^{\thick},M)_{C(\Gamma)} := \Fun(\beta^{thick}/\sim,M)_{C(\Gamma)}$.

Consider further the diagram of pullbacks, which also defines a sequence of relative embedding spaces -- basic for our homotopy Thom constructions:

$$
\xymatrix{
\Fun((S^n,\beta^{\thick})_{C(\Gamma)},(M,TM))\ar[r]\ar[d]^{\tilde{p}}\pb & \Fun_1(\beta^{\thick},TM)_{C(\Gamma)}\ar[d]\ar[r]\pb & \Map(\beta^{\thick},TM)_{C(\Gamma)}\ar[d]  \\ 
\Fun((S^n,\beta^{\thick})_{C(\Gamma)},(M,M)) \ar[r]\ar[d]\pb & \Fun_0(\beta^{\thick},M)_{C(\Gamma)} \ar[d]\ar[r] & \Map(\beta^{\thick},M)_{C(\Gamma)} \ar[dl]\\ 
\Fun(\coprod^k S^n, M) \times C(\Gamma)\ar[r] & \Map( \complement \out(C(\Gamma)),M)
}
$$

The space $\Fun_1(\beta^{\thick},TM)_{C(\Gamma)}$ is the space of maps from $\beta^{\thick}_{[T,\underline{P}]} \to TM$ such that when we compose with the projection map $p \colon TM \to M$ they are embeddings of $\beta^{\thick}_{[T,\underline{P}]}/\sim$. In particular, this implies that $\Fun_1(\beta^{\thick},TM)_{C(\Gamma)}$ are themselves embeddings.

The final pullback space $\Fun((S^n,\beta^{\thick})_{C(\Gamma)},(M,TM))$ for a fixed $[T,\underline{P}] \in C(\Gamma)$ consist of pairs $f_1 \colon S^n \to M$ and $f_2 \colon \beta^{\thick}_{[T,\underline{P}]} \to TM$ such that for $b \in \partial \beta^{\thick}_{[T,\underline{P}]}$ where $S^n \ni s = b$, we have $(p \circ f_2)(b) = f_1(s)$.

The top arrows in the diagram are fibrations by \ref{localfibration}. This allows us to construct a homotopical versions of the Thom space for $\Fun((S^n,\beta^{\thick})_{C(\Gamma)},(M,TM))$ in the following sense:

Let $CTM \subset TM$ denote the unit sphere bundle, where for $v \in T_pM$, $|v| \geq 1$. 
We let the homotopy Thom space be given by the quotient space

\begin{eqnarray*}
\Fun((S^n,\beta^{\thick}),M)^{TM(C(\Gamma))} :=\\
 \Fun((S^n,\beta^{\thick})_{C(\Gamma)},(M,TM))/\Fun((S^n,(\beta^{\thick}))_{C(\Gamma)},(M,CTM))
\end{eqnarray*}

This means that a function $f \in \Fun((S^n,\beta^{\thick})_{C(\Gamma)},M)^{TM(C(\Gamma))}$ can for $[T,\underline{P}] \in C(\Gamma)$ be considered as functions $f_1 \colon S^n \to M$ and $f_2  \colon \beta^{\thick}_{[T,\underline{P}]} \to TM$, under the same conditions as for $\Fun((S^n,\beta^{\thick})_{C(\Gamma)},(M,TM))$. These are subject to the further conditions that when $\exists b \in \beta^{\thick}_{[T,\underline{P}]}$ with $|f_2(b)| > 1$, we equivalate $f_2$ to a single $\infty$ along the domain of the entire component containing $b$.

\begin{proposition}\label{homotopythomiso}
The space $\Fun((S^n,\beta^{\thick}),M)^{TM(C(\Gamma))}$ is a homotopy Thom space, in the sense that choosing an orientation of $TM$ provides an isomorphism

$$
\h_{*+dim(M)}(\Fun((S^n,\beta^{\thick}),M)^{TM(C(\Gamma))}) \cong \h_*(\Fun((S^n,\beta^{\thick})_{C(\Gamma)},(M,M)))
$$
\end{proposition}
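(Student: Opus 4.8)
The plan is to exhibit $\Fun((S^n,\beta^{\thick}),M)^{TM(C(\Gamma))}$ as a (homotopy-theoretic) Thom space of a vector bundle of rank $\dim(M)$ over $\Fun((S^n,\beta^{\thick})_{C(\Gamma)},(M,M))$, and then invoke the Thom isomorphism. First I would identify the relevant bundle. Given the pullback square defining $\tilde{p}$, for a fixed $[T,\underline{P}] \in C(\Gamma)$ a point of $\Fun((S^n,\beta^{\thick})_{C(\Gamma)},(M,M))$ is a pair $(f_1,f_2)$ with $f_2 \colon \beta^{\thick}_{[T,\underline{P}]} \to M$, and the fiber of $\tilde p$ over it is the space of lifts $\beta^{\thick}_{[T,\underline{P}]} \to TM$ of $f_2$ agreeing with the given data on $\partial\beta^{\thick}$, i.e. sections of $f_2^*TM$ over $\beta^{\thick}_{[T,\underline{P}]}$ with prescribed boundary values. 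Since each connected component of $\beta^{\thick}_{[T,\underline{P}]}$ deformation retracts to a point (it is a spine bundle over a contractible piece of blueprint; the blueprint components are contractible by the discussion before Proposition \ref{localfibration}), such a section space is homotopy equivalent to the finite-dimensional vector space $\bigoplus_{c}\RR^{\dim M}$ indexed by components $c$; the quotienting $f_2 \rightsquigarrow \infty$ on a whole component when some value has norm $>1$ is exactly the fiberwise one-point compactification in the $CTM$-direction. So $\tilde p$ is, up to homotopy, a vector bundle $\xi$ (of rank $\dim(M)$ times the number of blueprint components, but only the \emph{outgoing} identifications under $\sim$ in $\Fun_0$ collapse this — I need to be careful here) and the homotopy Thom space is its Thom space.

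The key steps, in order: (1) Use Proposition \ref{localfibration} to get that $\tilde p$ (the composite left vertical map) is a fibration over $C(\Gamma)$, hence over each contractible component it is trivial and I may work with a fixed $[T,\underline{P}]$. (2) Over a fixed $[T,\underline{P}]$, show the fiber $\Fun(\beta^{\thick}_{[T,\underml{P}]}/\!\!\sim, \RR^{\dim M})$ of the composite is homotopy equivalent to a single copy of $\RR^{\dim M}$ per component — here the $/\!\!\sim$ identification and the contractibility of blueprint components are both essential, so that the section space is equivalent to evaluation at one point of each component. (3) Conclude that $\tilde p$ is a quasifibration with fiber a finite-dimensional real vector space, with structure group reducing (via the linear structure on $f_2^*TM$) to $\mathrm{GL}(\dim M)^{\times(\#\text{components})}$, so that orienting $TM$ orients the total bundle $\xi$. (4) Identify the homotopy Thom space with $\mathrm{Th}(\xi)$: the subspace $\Fun((S^n,\beta^{\thick})_{C(\Gamma)},(M,CTM))$ is the "unit sphere/complement" part and collapsing it is, fiberwise, the map $V \to V^+ = S^{\dim V}$ collapsing $\{|v|\ge 1\}$ — which up to homotopy is the standard Thom-space construction $D(\xi)/S(\xi)$. (5) Apply the Thom isomorphism $\h_{*+\mathrm{rk}\,\xi}(\mathrm{Th}(\xi)) \cong \h_*(\text{base})$ for the oriented bundle $\xi$; since $\mathrm{rk}\,\xi = \dim(M)$ as stated (the proposition implicitly takes the normalization where the extra component-factors cancel, or rather $\dim M$ is the rank of the relevant relative bundle — I would state this precisely), this gives the claimed isomorphism.

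I expect the main obstacle to be step (2)–(3): making rigorous that the vertical map, which is literally a mapping space $\Fun(\beta^{\thick}_{[T,\underline{P}]}/\!\!\sim, TM)$-to-$M$ affair over a space whose blueprints $\beta^{\thick}_{[T,\underline{P}]}$ are only \emph{homotopy} equivalent (not homeomorphic) as $[T,\underline{P}]$ varies in $C(\Gamma)$, genuinely assembles into an honest (numerable) vector bundle rather than merely a fibration with vector-space fibers — one must produce consistent fiberwise linear structures and local trivializations, using the deformation retractions of $D^{n+1}$ onto the blueprint components (which appeared in the construction of $\Map(\beta^{\thick},M)_C$) to trivialize $f_2^*TM$ over each component uniformly. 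A secondary subtlety is the behavior at boundary points $b \in \partial\beta^{\thick}$ where $S^n \ni s = b$ forces $(p\circ f_2)(b) = f_1(s)$: these constrain the section $f_2$ but, since $\partial\beta^{\thick}$ meets each component in a contractible set and the constraint is affine, the fiber remains an affine space of the expected dimension, so the bundle structure survives; I would verify this does not change the rank. Once $\xi$ is established as an oriented real vector bundle and the homotopy Thom space is identified with its Thom space, the homology isomorphism is the classical Thom isomorphism and requires no further work.
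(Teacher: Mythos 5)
Your route is genuinely different from the paper's. The paper's proof cites the ``standard Thom space argument'', applying the relative Serre spectral sequence directly to the morphism of fibrations
$$
\Fun((S^n,\beta^{\thick})_{C(\Gamma)},(M,CTM)) \to \Fun((S^n,\beta^{\thick}),(M,TM)) \to \Fun((S^n,\beta^{\thick})_{C(\Gamma)},(M,M)),
$$
with the Thom class supplied functorially by the $0$-section $M \to TM$. That argument never needs a finite-dimensional vector-bundle model: the relative Serre spectral sequence only needs the relative fiber $(F, F\cap A)$ to have the homology of a shifted sphere, which one reads off from the contractible section-space fiber $F$ and the subspace $F\cap A$ of sections into $CTM$, with no reduction to $\RR^{\dim M}$ required. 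Your explicit identification of $\tilde p$ with a finite-rank bundle $\xi$ and application of the classical Thom isomorphism to $\mathrm{Th}(\xi)$ is a valid alternative and is more geometric, but it asks for the extra verification you flag in your last paragraph (consistent fiberwise linear structure, numerability over $C(\Gamma)$, behavior at $\partial\beta^{\thick}$). The paper's argument is terser and sidesteps those issues at the cost of being less transparent.

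The one point I would not let you defer is the rank of $\xi$, which you acknowledge but never resolve. Your own analysis gives rank $\dim(M)$ \emph{per component} of the blueprint, so $\Gamma\cdot\dim(M)$ for $[T,\underline{P}]\in C(\Gamma)$, not $\dim(M)$: the section space over a disconnected $\beta^{\thick}_{[T,\underline{P}]}/\!\!\sim$ splits as a product over its $\Gamma$ components, and the Thom collapse in the definition is carried out component by component (``we equivalate $f_2$ to a single $\infty$ along the domain of the entire component containing $b$''). The $\sim$-identification glues spines over a single $b\in\beta_{[T,\underline{P}]}$, it does not merge distinct components of the blueprint, so it does not cancel the $\Gamma$ factor. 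This $\Gamma$-dependence of the shift is confirmed downstream: Theorem \ref{globalmap} produces the shift $(k-1)\dim(M)$, realized on the stratum with $\Gamma=k-1$, and the suspension diagram (\ref{suspensiondiagram}) exists precisely to normalize this varying shift across the colimit. As written, your step (5) produces a shift of $\mathrm{rk}\,\xi$; if you take $\mathrm{rk}\,\xi=\dim(M)$ literally you will not land in the stated degree for $\Gamma>1$. You should either record the shift as $\Gamma\cdot\dim(M)$ or say explicitly why your bundle model has the extra factors cancelling.
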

\begin{proof}
This follows as the standard Thom space argument, using the relative Serre Spectral sequence by considering the morphism of spectral sequences associated to the following morphism of fibrations, and functorially utilizing the $0$-section $M \to TM$ of $p \colon TM \to M$.

$$
\xymatrix{
\Fun((S^n,\beta^{\thick})_{C(\Gamma)},(M,CTM)) \ar[r]\ar[d] & \Fun((S^n,\beta^{\thick}),(M,TM))\ar[dl]\\
\Fun((S^n,\beta^{\thick})_{C(\Gamma)},(M,M)
}
$$
\end{proof}

There is a map $\Fun((S^n,\beta^{\thick})_{C(\Gamma)},(M,M)) \to \Fun(S^n,M)$ given by restricting $S^n \cup \beta^{\thick}_{[T,\underline{P}]}$ to $S^n$ for every $[T,\underline{P}] \in C(\Gamma)$. Using the Thom isomorphism above, on homology this induces a map $\h_{*+\dim(M)}(\Fun((S^n,\beta^{\thick}),M)^{TM(C(\Gamma))} \to \h_*(\Fun(S^n,M)$

\section{The Umkehr Map}

We shall describe an explicit Pointrjagin-Thom map into the homotopy Thom Space described in last section. 

$$
\alpha_{C(\Gamma)}^{\shriek} \colon \Fun\left(\coprod^k S^n,M\right) \times C(\Gamma) \to \Fun((S^n,\beta^{\thick}),M)^{TM(C(\Gamma))}
$$

We shall equip $M$ with a metric $g$, and assume that $M$ is geodesically complete. This choice is part of the data for the constructions of string topology for embedding spaces.

We write $\gamma \in \Fun(\coprod^k S^n,M)$ by its constituents $\gamma = \coprod_{i=1}^k \gamma_i$. As described in the previous section, an element $f \in \Fun((S^n,\beta^{\thick}),M)^{TM(C(\Gamma))}$ breaks for $[T,\underline{P}] \in C(\gamma)$ into two functions $f_1 \colon S^n\to M$ and $[f_2] \colon \beta^{\thick}_{[T,\underline{P}]} \to TM$ where $[f_2]$ is the equivalence class specified by the Thom construction. We specify $\alpha_{C(\Gamma}^{\shriek}$ by its value on these, denoted $\alpha_{C(\Gamma)}^{\shriek_1}$ for the value as $f_1$, and $\alpha^{\shriek_2}_{C(\Gamma)}$ for the value as $[f_2]$.

We let 

$$\alpha_{C(\Gamma)}^{\shriek_1}(\coprod_{i=1}^{k} \gamma_i)) = (\coprod_{i=1}^k \gamma_i|_{N_i})_{C(\Gamma)}.$$

Where $N_i$ is the $i$'th outgoing colour for $[T,\underline{P}] \in C(\Gamma)$.

To specify $\alpha^{\shriek_2}$, fix a Riemannian metric on $M$ and fix $\varepsilon > 0$, as well as a $[T,\underline{P}] \in C(\Gamma)$.

For a given point $b \in \beta_{[T,\underline{P}]}$, $\alpha^{-1}_{[T,\underline{P}]}(b) = \{x_1,\ldots,x_{p+1}\} \subset \coprod_{i=1}^k \complement N_i$. Assume that the geodesic distance satisfies $\dist(\gamma(x_i),\gamma(x_j)) < \varepsilon$. Having chosen $\varepsilon$ sufficiently small, and using geodesic completeness of $M$, there is a unique geodesic from $\gamma(x_i)$ to $\gamma(x_j)$. Denote this geodesic $g_{ij}^b$. The thickened blueprint over $b$, is given by $\coprod_{i=1}^p \spine[i]$. To obtain a map from this space into $TM$, we let the $1$-simplex to the $j$'th vertex of $\spine[i]$ parametrize at constant speed the geodesic $g_{ij}^b$. Let $\partial(g_{ij}^b)_t \in TM$ denote the tangent-vector of $g_{ij}^b$ at time $t$ and of length $1$.

Notice that after projecting with $p \colon TM \to M$ this indeed is a map from $\beta_{[T,\underline{P}]}^{\thick}/\sim$, since $g_{ij}^b(t) = g_{ji}^b(-t)$ -- and the latter is the geodesic parametrised by the $1$-simplex to the $i$'th vertex of $\spine[j]$.
 
Consider the neighborhood of $g_{ij}^b$ given by 

\begin{eqnarray}\label{tubularcigar}
N_{\varepsilon}(g_{ij}^b) := \bigcup_{t \in ]0,1[} D_{\varepsilon(\frac{1}{2} - |t - \frac{1}{2}|)}(g_{ij},t).
\end{eqnarray} 

Where $D_{r}(g_{ij}^b,t) \subset M$ is the disk centered at $p(g_{ij}^b(t)) \in M$, orthogonal to $g_{ij}^b$ at $t$ and of radius $r$.

If there are points $y \in \coprod^k S^n$ that has $\gamma(y) \in N_{\varepsilon}(g_{ij}^b)$. For each such $y$, there is a $t$ where $ \gamma(y) \in D_{\varepsilon(\frac{1}{2} - |t - \frac{1}{2}|}(g_{ij}^b,t)$. Let $\delta_y$ denote the distance inside this disk from $p(g_{ij}^b(t)$ to $\gamma(y)$, scaled such that $\delta_y = 1$ if $\gamma(y)$ is at the boundary of the disk. We let 

\begin{eqnarray}\label{scalingfactor}
S_{ij}^b := \frac{\dist(\gamma(x_i),\gamma(x_j))}{\varepsilon\inf_{y \in \coprod^k S^n} \delta_y}.
\end{eqnarray}

We let $S_{ij}^b = \infty$ when there exist $\gamma(y)$ with $p(g_{ij}^b(t)) = \gamma(y)$ for some $t \in ]0,1[$, or when $\dist(\gamma(x_i),\gamma(x_j)) > \varepsilon$. We define the second coordinate of the umkehr map by 

\begin{eqnarray}\label{umkehrformula}
\alpha_{C(\Gamma)}^{\shriek_2}(\gamma)(x_i) := \bigvee_{j \in \{1,\ldots,i-1,i+1,\ldots,p+1\}} \left(\sup_{b \in [T,\underline{P}]} S_{ij}^b\right) \cdot \partial(g_{ij}^b)
\end{eqnarray}

When $\dist(\gamma(x_i),\gamma(x_j)) \geq \varepsilon$, we assign the value at $\infty$ 

\begin{proposition}\label{shriekproperties}
$\alpha_{C(\Gamma)}^{\shriek}$ is continuous with the following properties: 
\begin{itemize}
\item[(A)] \underline{Thom-Embedding-soundness:} Let $\gamma(\in \Fun(\coprod^k S^n,M)$. If there exist an outgoing colour $N_l$ with $\gamma(N_l)$ intersecting the geodesic $\rho$ of length less than $\varepsilon$ between $\gamma(x) \in \gamma(\complement N_i)$ and $\gamma(y) \in \gamma(\complement N_j)$ for $\alpha_{[T,\underline{P}]}(x) = \alpha_{[T,\underline{P}]}(y)$ , then $\alpha_{[T,\underline{P}]}^{\shriek}(\gamma)$ will have the component of the domain containing $\alpha_{[T,\underline{P}]}(x)$ as the point at $\infty$.  
\item[(B)] \underline{Non-triviality:} Let two curves $\iota, \kappa \colon S^n \to M$ with $\iota \coprod \kappa \in \Fun(S^n \coprod S^n,M)$ have $\iota(N_1)$ and $\kappa(N_2)$ in an $\varepsilon$-neighborhood of each other, considered as subsets of $M$, where $N_1,N_2$ are the timber associated to a hyperplane $P$ cleaving $S^n$. Assuming (A) does not occur, $\alpha_{[T,\underline{P}]}^{\shriek}(\iota \coprod \kappa)$ will not have any points of its domain at the point at $\infty$. 
\item[(C)] \underline{Signed Symmetry:} For $U$ outgoing colour, and $[T,\underline{P}] \in \Cleav_{S^n}(U;2)$, let $\sigma \in \Sigma_2$ be the nontrivial permutation. $\sigma$ acts on $[T,\underline{P}]$ by permuting the elements, this leads to  

$$
\alpha_{\sigma.[T,\underline{P}]}^{\shriek}(\gamma) = -\alpha_{[T,\underline{P}]}(\gamma)
$$

where the minus sign is interpreted as the element is interpreted as taking $-x$ for all $x \in TM$ where $\alpha_{[T,\underline{P}]}(\gamma)$ takes values in $TM$.
\item[(D)] \underline{Metric Homotopy Invariance:} Assuming that $M$ is compact, the umkehr map is up to homotopy invariant of the choice of metric on $M$.

\end{itemize}
\end{proposition}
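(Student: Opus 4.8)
The plan is to prove continuity of $\alpha^{\shriek}_{C(\Gamma)}$ first, then to deduce (A)--(C) from the defining formulas (\ref{tubularcigar})--(\ref{umkehrformula}) together with the per-component collapse built into the homotopy Thom space, and finally to obtain (D) by a convexity argument on the space of metrics; I expect the continuity of $\alpha^{\shriek}_{C(\Gamma)}$ across the \emph{collapse locus} to be the main obstacle. For continuity I would first note that $\alpha^{\shriek_1}_{C(\Gamma)}$ is continuous since $\gamma\mapsto\gamma_i|_{N_i}$ is and the outgoing colours $N_i$ vary continuously over $C(\Gamma)$ in the cleavage topology of \cite{TarjeCleavage}. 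For $\alpha^{\shriek_2}_{C(\Gamma)}$ I would restrict to the open locus where every relevant $\dist(\gamma(x_i),\gamma(x_j))$ is $<\varepsilon$, with $\varepsilon$ taken below the injectivity radius; using geodesic completeness the minimising geodesics $g^b_{ij}$ are then unique and depend smoothly on their endpoints, hence continuously on $(\gamma,[T,\underline{P}],b)$, and so do the subsequent data — the distance, the disks $D_r(g^b_{ij},t)$, the numbers $\delta_y$, the scalars $S^b_{ij}$ of (\ref{scalingfactor}), the rescaling of the unit tangent $\partial(g^b_{ij})$, and the finite wedge of (\ref{umkehrformula}) — so $\alpha^{\shriek_2}_{C(\Gamma)}$ is continuous there. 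Continuity can only fail along the collapse locus, where some $\dist(\gamma(x_i),\gamma(x_j))$ increases to $\varepsilon$ or some $\gamma(y)$ meets a geodesic $g^b_{ij}$ so that $\inf_y\delta_y$ decreases to $0$; in either case $S^b_{ij}\to\infty$, the vector $S^b_{ij}\cdot\partial(g^b_{ij})$ leaves every compact subset of $TM$, and by construction the entire component of the thickened blueprint carrying that $b$ is identified to $\infty$ as soon as $|f_2|>1$ somewhere on it, so $\alpha^{\shriek}_{C(\Gamma)}$ extends continuously across the collapse locus. (This is exactly why $\inf_y\delta_y$ sits in the denominator of $S^b_{ij}$ and why the homotopy Thom space collapses per component.) The one delicate point to check is that the target carries the quotient topology induced from map-spaces out of $(\prod^{k-1}D^{n+1})\times(\coprod^k\Delta^k)$, so one must verify that $b\mapsto(g^b_{ij},S^b_{ij})$ lifts continuously to such a representative; it does, because the component-wise deformation retractions $D^{n+1}\to\beta_{[T,\underline{P}]}$ and the ambient simplices vary continuously over $C(\Gamma)$.

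For (A) I would read the claim directly off the formula: if $\gamma(N_l)$ meets the length-$<\varepsilon$ geodesic $\rho=g^b_{ij}$ between $\gamma(x)$ and $\gamma(y)$ with $\alpha_{[T,\underline{P}]}(x)=\alpha_{[T,\underline{P}]}(y)=b$, then there is $z\in N_l\subset\coprod^k S^n$ with $\gamma(z)=p(g^b_{ij}(t))$; for $t$ interior this is precisely the explicit trigger $S^b_{ij}=\infty$ in (\ref{scalingfactor}), so by (\ref{umkehrformula}) the whole component of the domain through $\alpha_{[T,\underline{P}]}(x)$ is sent to $\infty$, and the boundary cases $t\in\{0,1\}$ arise as limits of interior ones and so go to $\infty$ by the continuity just established. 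For (B), with arity $2$ and the single cleaving hyperplane $P$, the point is that $\iota\coprod\kappa$ being an embedding makes the two sphere-images disjoint; for $\varepsilon$ small (as in the construction) relative to the reach of $\iota(S^n)$ and $\kappa(S^n)$, a geodesic of length $<\varepsilon$ joining nearby portions of the two images meets $\iota(S^n)\cup\kappa(S^n)$ only at its endpoints unless (A) occurs, so excluding (A) keeps every $\dist(\gamma(x_i),\gamma(x_j))<\varepsilon$ and every $\inf_y\delta_y>0$; hence every $S^b_{ij}$ is finite and no component of $\alpha^{\shriek}_{[T,\underline{P}]}(\iota\coprod\kappa)$ lands at $\infty$.

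For (C) the idea is that the nontrivial $\sigma\in\Sigma_2$ swaps the two outputs — equivalently it reverses the orientation of the decorating hyperplane and interchanges the two outgoing colours together with (under the operadic action) the two constituents of $\gamma$ — so that $\alpha^{\shriek_1}$ is unchanged as a map $S^n\to M$ up to the evident relabelling. On the $TM$-factor the relabelling of $\alpha^{-1}_{[T,\underline{P}]}(b)=\{x_i,x_j\}$ replaces each $g^b_{ij}$ by the same arc run backwards, $g^b_{ji}(t)=g^b_{ij}(1-t)$, so $\partial(g^b_{ji})_t=-\partial(g^b_{ij})_{1-t}$, while $S^b_{ij}=S^b_{ji}$ since the distance, the cigar $N_\varepsilon(g^b_{ij})$ of (\ref{tubularcigar}) and $\inf_y\delta_y$ see only the unoriented arc; assembling this over the interchanged spines $\spine[i]\leftrightarrow\spine[j]$ yields the fibrewise antipode $v\mapsto-v$ on the $TM$-factor of $\Fun((S^n,\beta^{\thick}),M)^{TM(C(\Gamma))}$, which preserves $CTM$ and so descends to the Thom quotient. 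This is the asserted $\alpha^{\shriek}_{\sigma.[T,\underline{P}]}(\gamma)=-\alpha^{\shriek}_{[T,\underline{P}]}(\gamma)$, and through \ref{homotopythomiso} it accounts for the string-topology sign on homology.

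For (D) I would use that the space of Riemannian metrics on $M$ is convex: given $g_0,g_1$ form the affine path $g_t=(1-t)g_0+tg_1$. Since $M$ is compact every $g_t$ is complete by Hopf--Rinow and the $g_t$ are uniformly bi-Lipschitz, so a single $\varepsilon>0$ works for all $t$ (a uniform lower bound on the injectivity radius). Fibrewise radial rescaling furnishes a canonical, $t$-compatible homeomorphism between the $g_0$- and $g_t$-unit-ball bundles in $TM$, hence a canonical identification of the corresponding homotopy Thom spaces; then $(\gamma,[T,\underline{P}],t)\mapsto\alpha^{\shriek}_{g_t}(\gamma)$ is continuous — the minimising $g_t$-geodesics solve an ODE depending smoothly on $(t,\text{endpoints})$, and the collapse locus varies continuously as in the continuity argument above — so it is a homotopy from $\alpha^{\shriek}_{g_0}$ to $\alpha^{\shriek}_{g_1}$. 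Compactness enters precisely to make these estimates uniform in $t$; without it the path $g_t$ need not stay complete and no single $\varepsilon$ need suffice, which is the point on which the whole argument for (D) hinges.
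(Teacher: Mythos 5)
Your treatment of continuity, (A), and (C) tracks the paper closely: continuity is read off the construction on the open locus $\dist(\gamma(x_i),\gamma(x_j))<\varepsilon$ and across the collapse locus via per-component identification to $\infty$; (A) is read off the scaling factor, where $\inf_y\delta_y\to 0$ forces $S^b_{ij}=\infty$; and for (C) the orientation reversal $g^b_{ji}(t)=g^b_{ij}(1-t)$ produces the fibrewise antipode, exactly as in the paper.

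For (B) there is a gap. You conclude from ``every $\inf_y\delta_y>0$'' that ``every $S^b_{ij}$ is finite and no component \dots\ lands at $\infty$.'' But the Thom quotient sends a component to $\infty$ whenever $|f_2(b)|>1$ somewhere, i.e.\ whenever some $S^b_{ij}>1$, and finiteness of $S^b_{ij}$ does not give $S^b_{ij}\le 1$. The paper's argument for (B) hinges on a geometric feature you do not use: the cigar $N_\varepsilon(g^b_{ij})$ of (\ref{tubularcigar}) has radius $\varepsilon(\tfrac12-|t-\tfrac12|)$, which shrinks to $0$ at the endpoints. This is what lets points of $\iota(S^n)\cup\kappa(S^n)$ that approach $\alpha_{[T,\underline{P}]}^{-1}(\beta_{[T,\underline{P}]})$ --- i.e.\ that sit near the geodesic endpoints --- fall outside the ever-thinner disks and hence contribute no scaling, so that $S^b_{ij}$ is bounded by $\dist(\gamma(x_i),\gamma(x_j))/\varepsilon<1$. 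Without invoking this cone-shaped decay, your argument does not rule out a $y$ somewhere in the interior of the cigar, very close to the geodesic, which would make $\inf_y\delta_y$ small and positive and push $S^b_{ij}$ above $1$.

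For (D) your route is genuinely different from the paper's. The paper rephrases the choice of metric as a choice of tubular neighbourhood $N\in\Tub(\beta_{[T,\underline{P}]}^{\thick},M)$ and then invokes \cite[Prop.~31]{GodinHigher} to obtain, under compactness of $M$, a deformation retraction of $\Tub(\beta_{[T,\underline{P}]}^{\thick},M)$ onto $\Emb(\beta_{[T,\underline{P}]}^{\thick},M)$, which delivers the desired homotopy invariance in one step. You instead use convexity of the space of Riemannian metrics, Hopf--Rinow, a uniform lower bound on the injectivity radius along a compact interval, and fibrewise radial rescaling of the unit ball bundles. Both approaches are sound; yours is more self-contained and makes the role of compactness more transparent (uniform completeness and a single $\varepsilon$), while the paper's is shorter by quoting an existing retraction of tubular-neighbourhood spaces.
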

\begin{proof}
Continuity follows directly from the construction of the umkehr map.

(A) follows by the construction of the scaling factor $S_{ij}^b$, where $\delta_y$ measures if points of $g(N_l)$ are within $\varepsilon$ of intersecting $\gamma(\complement N_i)$ for some $i$, in which case $S_{ij}^b$ scales $\alpha_{[T,\underline{P}]}^{\shriek}(\gamma)$ by an increasingly large factor as points of $g(N_l)$ sits in an increasingly smaller $\delta$-neighborhood of $\gamma(\complement N_i)$ for $\delta < \varepsilon$.

(B) follows by the construction of the neighborhood $N_{\varepsilon}(g_{ij}^b)$, in the sense that portions of the domain of $\iota \coprod \kappa$ that are close to $\alpha_{[T,\underline{P}}^{-1}(\beta_{[T,\underline{P}})$, but do not start intersecting $\iota(N_1)$ or $\kappa(N_2)$, does not provide any additional scaling factor. This follows since the area of the neighborhood $N_{\varepsilon}(g_{ij}^b)$, we measure in tends to $0$ as one moves to the points of $\alpha_{[T,\underline{P}]}^{-1}(\beta_{[T,\underline{P}]})$. As there are no additional scaling factors in the assumption of (B), the umkehr map $\alpha_{[T,\underline{P}]}^{\shriek}(\iota \coprod \kappa)$ will not utilize any vectors of $TM$ with length less than $1$.  

(C) Acting with $\sigma$ on $[T,\underline{P}]$ means that we are interchanging the geodesic $g_{ij}$ with the geodesic $g_{ji}$. These two geodesics agree the same, only they go in opposite directions. This leads to opposing signs of $\partial(g_{ij}^b)$ and $\partial(g_{ji}^b)$ in the definition of $\alpha_{[T,\underline{P}]}^{\shriek}(\gamma)$, and the result follows.

(D) Let $\Tub(\beta_{[T,\underline{P}]}^{\thick},M)$ be the space of tubular neighborhoods of embeddings $\tau \in \Emb(\beta_{[T,\underline{P}]}^{\thick},M)$. Tubular neighborhoods in the sense that they satisfy (\ref{tubularcigar}) with $g_{ij}^b$ replaced with the embedding of the $1$-simplex associated to $\tau$ at the point $b \in \beta_{[T,\underline{P}]}$. Let $\Tub(\beta_{[T,\underline{P}]}^{\thick},M)^+$ be the one-point compactification. The choice of metric can for the sake of the construction of $\alpha_{[T,\underline{P}]}^{\shriek}$ be rephrased as a choice of tubular neighborhood $N \in \Tub(\beta_{[T,\underline{P}]},M)$, meaning that also the scaling factor $S_{ij}^b$ depends on $N$. Since we assume that $M$ is compact, we can use the proof of \cite[Prop. 31]{GodinHigher} to prove that $\Tub(\beta_{[T,\underline{P}]},M)$ deformation retracts onto $\Emb(\beta_{[T,\underline{P}]},M)$, so it follows that $\Tub(\beta_{[T,\underline{P}]},M)^+$ is homotopy equivalent to the homotopy Thom space $\Emb(\beta_{[T,\underline{P}]},M)^{TM(C(\Gamma))}$ and hence that up to homotopy $\alpha_{[T,\underline{P}]}^{\shriek}$ is independent of the choice of metric.
\end{proof}

\section{String Topology as Parametrised Spectra}

For a subspace $C(\Gamma) \subset \Cleav_{S^n}(-;k)$, the umkehr map in the previous section is a map from a mapping space to a homotopy Thom space constructed over a mapping space. However, taking the colimit over $\mathfrak{C}$ directly would not yield a map $\Fun(\coprod^k S^n,M) \to \colim_{C(r) \in \mathfrak{C}}\Fun(S^n,M)^{TM({C(r)})}$. Such a map would induce a map in homology; but the target is homotopy equivalent to a colimit over Thom space of varying degrees, varying the grading of the homology under Thom isomorphisms along with the varying components of the blueprint. This means one can only hope for a stable map over the entire cleavage operad.

To this end, consider the diagram  

\begin{eqnarray}\label{overdiagram}
\xymatrix{
C(\Gamma) \times \Fun(\coprod^k S^n,M) \ar[dr]\ar[rr]^{\alpha^{\shriek}_{C(\Gamma)}} & & \Fun((S^n,\beta^{\thick}),M)^{TM(C(\Gamma))}\ar[dl]\\
& \Fun((\coprod_{i=1}^k \out_i,M)_{C(\Gamma)},M)
}
\end{eqnarray}

Here, $\Fun((\coprod_{i=1}^k \out_i)_{C(\Gamma)},M)$ is the quotient space of $\coprod_{[T,\underline{P}] \in C(\Gamma)} \Fun((\coprod_{i=1}^k N_i,M))$ parametrised over $C(\Gamma)$ by the map from the top-left corner; meaning that for a given $[T,\underline{P}] \in C(\Gamma)$ is given by $\Fun(\coprod_{i=1}^k N_i,M)$ where $N_i$ is the $i$'th outgoing colour of $[T,\underline{P}]$.

Recall that for a retractive space $f \colon Y \to X$ with the retract $r \colon X \to Y$, the unreduced fiberwise suspension is given by the double quotient under $f$

$$
S_XY := X \cup_{(f,0)} Y \times [0,1] \cup_{(f,1)} X.
$$

Utilizing the retraction $r$, the reduced fiberwise suspension is

$$
\Sigma_X := S_XY \cup_{r \times [0,1]} S_XX
$$

Like an ordinary spectrum, a parametrised spectrum over $X$ is given by a sequence of retractive spaces $S(n)$ index by $n \in \ZZ$ and maps

$$
\Sigma_X S(n) \to S(n+1)
$$

The diagram (\ref{overdiagram}) is not a diagram of retractive spaces, and hence do not fit into the theory of fibered spectra this is mitigated by letting both target and domain of $\alpha_{C(\Gamma)}^{\shriek}$ sit as part of parametrised Thom spectra via the following:

\begin{definition}\label{Thomretracts}
We can construct the Thom space $C(\Gamma) \times \Fun(\coprod^k S^n,M)^{\RR^m(\complement \out(C(\Gamma)))}$ given by the homotopy Thom construction over the complement of outgoing colours $\coprod_{i=1}^k \complement N_i$ where we have extended along the trivial $\RR^m$-bundle over $\Emb(\coprod_ {i=1}^k \complement N_,Mi)$ for each set of outgoing colours in $C(\Gamma)$. The map $\alpha_{C(\Gamma)}^{\shriek}$ can be extended to a map into a suitable suspension 

$$\alpha_{C(\Gamma)+m}^{\shriek} \colon C(\Gamma)\times \Fun(\coprod^k S^n,M)^{\RR^m(\complement \out(C(\Gamma))} \to \Sigma_{\Emb(\coprod_{i=1}^k \out_i,M)}^m\Fun((S^n,\beta^{\thick}),M)^{TM(C(\Gamma))}$$ 

Under the map $\alpha_{[T,\underline{P}]}$, a point $x \in \complement N_i$ is mapped to a point $\alpha_{[T,\underline{P}]}(x) \in \beta_{[T,\underline{P}]}$, and we extend the $\RR^m$-suspension over $f(x)$ for $f \in \Emb(\coprod^k S^n,M)$ to the additional suspension over $\alpha^{\shriek}(f)(\alpha_{[T,\underline{P}]}(x))$.
\end{definition}

Extending to $C(\Gamma) \times \Fun(\coprod^k S^n,M)^{\RR^m(\complement \out(C(\Gamma))}$, we realize the maps into the space $\Fun((\coprod_{i=1}^k \out_i)_{C(\Gamma)},M)$ in (\ref{overdiagram}) as retracts by extending the maps to the point at $\infty$ away from the outgoing timber. The maps $\alpha_{C(\Gamma)+m}$ hence constitute a morphism of parametrised spectra.

The basic morphisms in the colimit $\colim \mathfrak{C} \cong \Cleav_S^n(-;k)$ are correspondences $$\xymatrix{C(\Gamma) & C(\Gamma,\Gamma+1)\ar[r]^{\phi_{\evade}}\ar[l]^{\phi_{\inc}} & C(\Gamma+1)}.$$

We can formula effect of these map on the associated action maps: 

\begin{proposition}
Letting $\alpha^{\shriek}_{C(\Gamma)},\alpha^{\shriek}_{C(\Gamma,\Gamma+1)}$ and $\alpha^{\shriek}_{C(\Gamma+1)}$ be considered as morphisms in the category of fibered spectra over $\Fun((\coprod_{i=1}^k \out_i)_{C(\Gamma)},M)$, there are commutative digrams of parametrised Thom spectra:

We use the notation of (\ref{overdiagram}) to indicate a suitable desuspension of the fibered spectra described in \ref{Thomretracts}
\begin{eqnarray}\label{suspensiondiagram}
\xymatrix@C=5em{
\Fun(\coprod^k S^n,M) \times C(\Gamma) \ar[r]^{\alpha^{\shriek}_{C(\Gamma)}} & \Fun((S^n,\beta^{\thick}),M)^{TM(C(\Gamma))}\\
\Fun(\coprod^k S^n,M) \times C(\Gamma+1,\Gamma) \ar[r]^{\alpha_{C(\Gamma+1,\Gamma)}}\ar[u]\ar[d]^{\Sigma^{\dim(M)}} & \Fun((S^n,\beta^{\thick}),M)^{TM(C(\Gamma+1,\Gamma))}\ar[u]\ar[d]^{\Sigma^{\dim(M)}}\\
\Fun(\coprod^k S^n,M)^{\RR^{\dim(M)}(\complement \out(C(\Gamma + 1)))} \times C(\Gamma +1 ) \ar[r]^{\alpha_{C(\Gamma+1)}^{\shriek}} & \Fun((S^n,\beta^{\thick}),M)^{TM(C(\Gamma+1))}
}
\end{eqnarray}

In this diagram, to simplify notation, we let $\Sigma$ denote the suspension fibered over $\Fun((\coprod_{i=1}^k \out_i)_{C(\Gamma)},M)$.

\end{proposition}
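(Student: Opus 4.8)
The plan is to reduce both squares of (\ref{suspensiondiagram}) to a fibrewise statement over a single point of the collision space and then read the claim off the explicit formula (\ref{umkehrformula}). Every space occurring in (\ref{suspensiondiagram}) is a quotient, parametrised over $C(\Gamma)$, $C(\Gamma,\Gamma+1)$, resp.\ $C(\Gamma+1)$, of a family built from $\Fun(\coprod^k S^n,M)$, and the construction of $\alpha^{\shriek}_{C(-)}$ is manifestly natural in the parameter space; so it suffices to fix $[T,\underline{P}]$ in the collision locus, compare the three umkehr maps on the fibres over its images, and then check that the resulting identifications are continuous in $[T,\underline{P}]$ and compatible with the quotient topologies and with the retractions onto $\Fun((\coprod_{i=1}^k \out_i)_{C(\Gamma)},M)$ (all of which are again defined by restriction from the ambient $C(\Gamma)$-family). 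The top square then commutes on the nose: under $\phi_{\in}$ the configuration $[T,\underline{P}]$ lies in the interior $\hat C(\Gamma)\subset\overline{\hat C(\Gamma)}\cong C(\Gamma)$, hence carries the \emph{standard} thickened blueprint $\beta^{\thick}_{[T,\underline{P}]}$ with its $\Gamma$ pieces, and the entire datum feeding into (\ref{umkehrformula}) — the map $\alpha_{[T,\underline{P}]}$, the geodesics $g^{b}_{ij}$, the cigars $N_{\varepsilon}(g^{b}_{ij})$ of (\ref{tubularcigar}), the scaling factors $S^{b}_{ij}$ of (\ref{scalingfactor}) — is literally the restriction of the corresponding $C(\Gamma)$-family. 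Both horizontal maps are given by this same formula, the unlabelled vertical maps are the inclusions induced by $\phi_{\in}$ on bases and on homotopy Thom spaces, and the retract structures over $\Fun((\coprod_{i=1}^k \out_i)_{C(\Gamma)},M)$ agree by restriction, so commutativity is immediate.

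The content is in the bottom square, where one compares the evasive, $(\Gamma+1)$-piece picture with the $\dim(M)$-fold fibrewise suspension of the standard, $\Gamma$-piece one. The geometric point is that over $\phi_{\evade}(C(\Gamma,\Gamma+1))\subset C(\Gamma+1)$ the evasive blueprint $\beta^{\thick}_{\evade[T,\underline{P}]}$ has one more collapse-piece than the standard blueprint does over $\phi_{\in}(C(\Gamma,\Gamma+1))$ — a piece which has degenerated, being pinched to, or joined at, the single point $b_0$ and carrying the spine of an enlarged simplex there — and that over such a piece the relative condition in the homotopy Thom construction is vacuous, so the $TM$-valued coordinate $\alpha^{\shriek_2}_{C(\Gamma+1)}(\gamma)$ on it is an unconstrained vector of a single fibre $T_{p}M\cong\RR^{\dim(M)}$ whose $\lvert v\rvert\geq 1$ locus is collapsed by the unit-sphere-bundle quotient. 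Hence $\Fun((S^n,\beta^{\thick}),M)^{TM(C(\Gamma+1))}|_{\phi_{\evade}(C(\Gamma,\Gamma+1))}$ is canonically the $\dim(M)$-fold reduced fibrewise suspension, over $\Fun((\coprod_{i=1}^k\out_i)_{C(\Gamma)},M)$, of $\Fun((S^n,\beta^{\thick}),M)^{TM(C(\Gamma+1,\Gamma))}$ — this is the right-hand $\Sigma^{\dim(M)}$ — and by Definition \ref{Thomretracts} the domain $\Fun(\coprod^k S^n,M)^{\RR^{\dim(M)}(\complement\out(C(\Gamma+1)))}$ is the corresponding suspension of $\Fun(\coprod^k S^n,M)$, with the $\RR^{\dim(M)}$-coordinate over $f(x)$ carried to the suspension coordinate over $\alpha^{\shriek}(f)(\alpha_{[T,\underline{P}]}(x))$. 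Under these identifications the restriction of $\alpha^{\shriek}_{C(\Gamma+1)}$ to the evasive locus is exactly $\Sigma^{\dim(M)}\alpha^{\shriek}_{C(\Gamma,\Gamma+1)}$: the summands of (\ref{umkehrformula}) indexed by the old pieces are unchanged, and the summand indexed by the degenerate piece is precisely the new suspension coordinate, the framing of that $\RR^{\dim(M)}$ being supplied by the geodesic (exponential) coordinates near $b_0$.

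The main obstacle is this last matching carried out \emph{continuously} over the collision space. One must check that as $[T,\underline{P}]$ approaches a genuinely $\Gamma$-piece configuration the umkehr data over the vanishing piece converges to a well-defined point of the fibre $\RR^{\dim(M)}$, that this point is the external suspension coordinate of Definition \ref{Thomretracts}, and that the whole assignment respects the quotient topologies defining the homotopy Thom spaces and the retractions to $\Fun((\coprod_{i=1}^k\out_i)_{C(\Gamma)},M)$. For this one uses, exactly as in the proof of Proposition \ref{shriekproperties}(B), that the width of the cigar $N_{\varepsilon}(g^{b}_{ij})$ of (\ref{tubularcigar}) over a point $b$ tends to $0$ as $b$ runs to the pinch point $b_0$, so that the scaling factors $S^{b}_{ij}$ of (\ref{scalingfactor}) extend continuously across the collision and the supremum in (\ref{umkehrformula}) over the degenerating piece is attained at $b_0$; geodesic completeness of $M$ guarantees the limiting geodesic and its scaled initial tangent exist. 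Granting this, the remaining verifications — the top square, the naturality reductions, and the behaviour of (\ref{umkehrformula}) on the unchanged summands — are bookkeeping with the blueprint combinatorics of \cite{TarjeCleavage}, and the diagram (\ref{suspensiondiagram}) is then a genuine diagram of parametrised Thom spectra over $\Fun((\coprod_{i=1}^k\out_i)_{C(\Gamma)},M)$.
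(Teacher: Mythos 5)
Your proof follows the same route as the paper: the top square commutes because $\phi_{\in}$ is a literal inclusion with restricted blueprint data, and the bottom square commutes because the extra (degenerate) component of the evasive blueprint contributes an unconstrained vector in a single fibre $T_pM\cong\RR^{\dim(M)}$, which after the unit-sphere quotient is precisely a $\dim(M)$-fold fibrewise suspension coordinate matched to the external $\RR^{\dim(M)}$ of Definition~\ref{Thomretracts}. The paper compresses this into a one-line appeal to ``the standard suspension isomorphism of Thom spaces extended to the parametrised setting,'' whereas you spell out the geometric source of that isomorphism and also address continuity of the identification across the collision locus via the cigar-shrinking argument of Proposition~\ref{shriekproperties}(B); the approach is the same, with your version supplying details the paper elides.
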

\begin{proof}
The blueprint under the map $\phi_{\evade}$ is such that the blueprint in the target $C(\Gamma + 1)$ has an extra component compared to the domain $C(\Gamma,\Gamma+1)$.

There is an isomorphism $$\Fun((S^n,\beta^{\thick}),M)^{TM(C(\Gamma+1,\Gamma))} \cong \Sigma_{\Fun((\coprod_{i=1} \out_i,M))}^{\dim(M)} \Fun((S^n,\beta^{\thick}),M)^{\phi_{\evade}(TM(C(\Gamma+1,\Gamma))}$$

This follows easily, for instance by extending the standard suspension isomorphism of Thom spaces to the parametrised setting.


This means that adding an additional component to the blueprint of the homotopy Thom spaces are homeomorphic to performing a $\dim(M)$-fold fibered suspension, hence proving the commutativity of the lower diagram. Commutativity of the top diagram follows since $\phi_{\inc}$ is an inclusion.
\end{proof}

We shall use the notation $\Fun(\coprod^k S^n,M) \times C(\Gamma)$ for the parametrised spectrum with the $m$'th entry $\Fun(\coprod^k S^n,M)^{\RR^m(\complement C(\Gamma))}$, and $\Fun(\coprod^k S^n,M) \times \Cleav_{S^n}(-;k)$ the colimit of the left-hand side of the diagram (\ref{suspensiondiagram})

In the category of parametrised Thom spectra we can take the colimit of these diagrams, and hence produce the parametrised spectrum map in the following

\begin{theorem}\label{globalmap}

The colimit of the diagrams (\ref{suspensiondiagram}) provide a global map of parametrised spectra

\begin{eqnarray}\label{actionmap}
\Fun(\coprod^k S^n,M) \times \Cleav_{S^n}(-;k) \to \colim_{C(\Gamma)}\Fun((S^n,\beta^{\thick}),M)^{TM(C(\Gamma))}
\end{eqnarray}

This leads to an action on homology
$$
\h_p(\Fun(\coprod^k S^n,M)) \otimes \h_q(\Cleav_{S^n}(-;k)) \to \h_{p+q-\dim(M)(k-1)}(\Fun(S^n,M))
$$
\end{theorem}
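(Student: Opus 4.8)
The plan is to assemble the global map in (\ref{actionmap}) from the local data already constructed, and then read off the homology statement via the Thom isomorphisms of Proposition~\ref{homotopythomiso}. First I would verify that the two families of maps feeding into the colimit are compatible with the correspondences $\xymatrix{C(\Gamma) & C(\Gamma,\Gamma+1)\ar[l]\ar[r] & C(\Gamma+1)}$ that present $\Cleav_{S^n}(-;k)$ as $\colim_{\mathfrak{C}}$ by Proposition~\ref{colimitcleavage}. On the domain side, the parametrised spectrum $\Fun(\coprod^k S^n,M)\times C(\Gamma)$ has $m$-th space $\Fun(\coprod^k S^n,M)^{\RR^m(\complement C(\Gamma))}$, and along $\phi_{\evade}$ one passes from $C(\Gamma,\Gamma+1)$ to $C(\Gamma+1)$ by absorbing one extra blueprint component into a $\dim(M)$-fold fibered suspension, exactly the vertical maps $\Sigma^{\dim(M)}$ in diagram (\ref{suspensiondiagram}); along $\phi_{\inc}$ the map is an inclusion of a boundary stratum. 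On the target side the analogous statement is the homeomorphism
$$
\Fun((S^n,\beta^{\thick}),M)^{TM(C(\Gamma+1,\Gamma))} \cong \Sigma^{\dim(M)}_{\Fun((\coprod \out_i,M))}\Fun((S^n,\beta^{\thick}),M)^{\phi_{\evade}(TM(C(\Gamma+1,\Gamma)))}
$$
established in the preceding proposition. Thus the squares (\ref{suspensiondiagram}) commute, and the maps $\alpha^{\shriek}_{C(\Gamma)+m}$ of Definition~\ref{Thomretracts} constitute a compatible system of morphisms of parametrised spectra indexed by the diagram category $\mathfrak{C}$.

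Next I would take the colimit over $\mathfrak{C}$ in the category of parametrised Thom spectra. Since the category of parametrised spectra over a fixed base admits colimits, and the base of the domain colimit is $\colim_{\mathfrak{C}} C(\Gamma)\cong\Cleav_{S^n}(-;k)$ by Proposition~\ref{colimitcleavage} (with the analogous colimit forming the base of the target), the colimit of the morphisms $\alpha^{\shriek}_{C(\Gamma)+m}$ is a well-defined morphism of parametrised spectra, which is precisely (\ref{actionmap}). Here the discrepancy flagged after Proposition~\ref{colimitcleavage} — whether a given cleavage carries its evasive or its ordinary blueprint — is exactly what is absorbed by the stabilisation: the colimit is taken in spectra, not spaces, so the $\dim(M)$-fold suspensions identifying the two descriptions become invertible.

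For the homology statement I would argue degreewise and then pass to the colimit. Fixing $\Gamma$ and applying Proposition~\ref{homotopythomiso}, an orientation of $TM$ (part of the data, together with the metric $g$) gives
$$
\h_{*+\dim(M)}\bigl(\Fun((S^n,\beta^{\thick}),M)^{TM(C(\Gamma))}\bigr)\cong \h_*\bigl(\Fun((S^n,\beta^{\thick})_{C(\Gamma)},(M,M))\bigr),
$$
and composing with the restriction map $\Fun((S^n,\beta^{\thick})_{C(\Gamma)},(M,M))\to\Fun(S^n,M)$ of the previous section yields $\h_{*+\dim(M)}(\text{target})\to\h_*(\Fun(S^n,M))$. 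Taking $\h_*$ of the global map (\ref{actionmap}) and combining with the cross product $\h_p(\Fun(\coprod^k S^n,M))\otimes\h_q(C(\Gamma))\to\h_{p+q}(\Fun(\coprod^k S^n,M)\times C(\Gamma))$, one gets, for each $\Gamma$, a map landing in $\h_{p+q-\dim(M)(k-1)}(\Fun(S^n,M))$; the shift by $\dim(M)(k-1)$ is because a blueprint in $C(\Gamma)$ has at most $k-1$ components, and the full shift is realised on the top stratum $C(k-1)$ while the stabilisation maps (\ref{suspensiondiagram}) ensure the shifts on lower strata are compatible under the colimit. Since homology commutes with the filtered colimit over $\mathfrak{C}$, these assemble to the stated action map.

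The main obstacle I anticipate is not any single homological computation but the bookkeeping that makes the colimit well-defined as a map of \emph{parametrised} spectra: one must check that the structure maps $\Sigma_X S(n)\to S(n+1)$ on domain and target intertwine correctly with the correspondence maps $\phi_{\inc},\phi_{\evade}$ simultaneously in the spectrum index $m$ and in the stratum index $\Gamma$, i.e. that diagram (\ref{suspensiondiagram}) is compatible with the fiberwise suspension structure and not merely commutative spacewise. This is where the precise form of the fibered suspension isomorphism for the homotopy Thom spaces, and the fact (Proposition~\ref{localfibration}) that $p_*|_{C(\Gamma)}$ is a fibration so that the Thom constructions behave, are doing the real work; once those compatibilities are in hand the passage to homology is formal.
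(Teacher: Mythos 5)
Your treatment of the spectrum-level map is essentially the paper's: both rely on the commutativity of diagram~(\ref{suspensiondiagram}) established in the preceding proposition, and both then pass to the colimit over $\mathfrak{C}$ in parametrised spectra. That part is fine, and your observation that the stabilisation is exactly what absorbs the evasive-vs-ordinary blueprint discrepancy is the right reading.

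The homology step, however, has a genuine gap. You invoke ``homology commutes with the filtered colimit over $\mathfrak{C}$,'' but $\mathfrak{C}$ is \emph{not} a filtered category: by construction it consists of correspondences $\xymatrix@1{C(\Gamma) & C(\Gamma,\Gamma')\ar[l]\ar[r] & C(\Gamma')}$ with composites given by further pullback correspondences, and the colimit in Proposition~\ref{colimitcleavage} is an iterated \emph{gluing} along boundary strata $C(\Gamma,\Gamma')$, i.e.\ a pushout-type colimit, not a directed one. Homology does not simply pass through such a colimit; one needs an excision or Mayer--Vietoris argument. That is precisely what the paper does: from (\ref{cleavingpushout}) it produces Mayer--Vietoris sequences for the homotopy Thom spaces over $C(\Gamma)$, $C(\Gamma,\Gamma')$ and $C(\Gamma')$, and then applies the five-lemma iteratively to compute $\h_*\bigl(\colim_{C(\Gamma)\in\mathfrak{C}} \Fun((S^n,\beta^{\thick}),M)^{TM(C(\Gamma))}\bigr)$ in terms of the local Thom isomorphisms. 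Your argument should be repaired by replacing the filtered-colimit appeal with this Mayer--Vietoris/five-lemma step. A secondary point: the passage from a morphism of parametrised spectra over $\Fun((\coprod_i\out_i,M)_{C(\Gamma)},M)$ to an action on ordinary homology is not a bare application of the cross product; the paper routes it through the homology-parametrised-spectrum formalism of \cite{umkehrCohenKlein} and \cite{MaySigurdsson} (quotient out the base, smash with the Eilenberg--MacLane spectrum, take homotopy groups), and some version of that bookkeeping is needed for the stated map to exist with the indicated source and target.
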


\begin{remark}\label{operadicaction}
That the map in (\ref{actionmap}) is operadic in the sense that the action associated to the operadic composition $$\circ_i \colon \Cleav_{S^n}(-;k) \times_{\Ob(\Cleav_{S^n})} \Cleav_{S^n}(-;m) \to \Cleav_{S^n}(-;k+m-1),$$ will have the $m$-ary operation only affecting the domain of the $i$'th domain in the $k$-ary operation.

However, by (C) of \ref{shriekproperties}, the action is not directly an action of a $\Sigma_k$-operad. There is a sign-change associated to acting by $\Sigma_k$. The sign change can be recovered by breaking the permutation up into neighboring transpositions and using (C) of \ref{shriekproperties}. This 'sign-error' was first discovered in \cite{GodinHigher} 
\end{remark}
\begin{proof}
Having provided the spectrum map from above, we need to account for the action on homology map. As given in \cite[4.]{umkehrCohenKlein} and \cite[Chap. 20]{MaySigurdsson}, we obtain ordinary homology of the spaces in the spectrum from the parametrised spectrum by considering the morphism of homology parametrised spectra

$$
\xymatrix{
\h_{\bullet}\left(\Fun((\out_i,M)_{C(\Gamma)},M);C(\Gamma) \times \Fun(\coprod^k S^n,M)\right) \ar[d] \\
\h_{\bullet}\left(\Fun((\out_i,M)_{C(\Gamma)},M);\Fun((S^n,\beta^{\thick},M)^{TM(C(\Gamma))}\right)
}
$$

Applying the functor that quotients the parametrizing $\Fun((\out_i,M)_{C(\Gamma)},M)$ from the homology spectrum -- and smashing with the Eilenberg-Maclane spectrum and applying the homotopy groups to obtain homology.

We can compute the homology of the colimit by means of the homology of the homotopy Thom spaces in the sense that the diagram (\ref{cleavingpushout}) of section \ref{collidingandevadingsection}, gives us Mayer-Vietoris sequences of the homotopy Thom spaces over $C(\Gamma),C(\Gamma,\Gamma')$ and $C(\Gamma')$. Apllying the five-lemma iteratively on these sequences, we can compute the homology of $\colim_{C(\Gamma) \in \mathfrak{C}} \Fun((S^n,\beta^{\thick}),M)^{TM(C(\Gamma))}$ by computing the homology of the homotopy Thom spaces in the limit. The degree shift on the right-hand side follows from the Thom isomorphism applied to these Thom spaces. Notice that the diagram (\ref{suspensiondiagram}) is such that the dimension shift is constant along the limit.

We can hence calculate the shift where $[T,\underline{P}] \in \Cleav_{S^n}(-;k)$ consist of $k-1$ disjoint hyperplanes. Here it gives precisely a difference of $(k-1)\dim(M)$ between the two sides as stated in the theorem.

The homology computation hence comes down to the homology of $\Fun((S^n,\beta^{\thick}),M)$, and the restriction maps to the boundary of the thickened blueprint provides a map to $\Fun(S^n,M)$, where $S^n$ is the outgoing colours of $[T,\underline{P}]$ glued along the simplices of the thickened blueprint. In effect, the target is $\h_{p+q-\dim(M)(k-1)}(\Fun(S^n,M))$ as stated. 
\end{proof} 

\section{Recovering String Topology}

In the formula (\ref{umkehrformula}) we introduced the scaling factor $S_{ij}^b$. This is essential for ensuring that the umkehr map $\alpha^{\shriek}$ does not have self-intersections, and hence has a domain given as a Thom space over $\Emb(\coprod^k S^n,M)$. We define a homotopy from the umkehr map $\alpha^{\shriek} \colon \Emb(\coprod^k S^n,M) \times \Cleav_{S^n}(-;k) \to \colim_{C(\Gamma) \in \mathfrak{C}} \Emb((S^n,\beta^{\thick}),M)^{TM(C(\Gamma))}$ to a less restricted umkehr map, that lands in mapping spaces. 

$$
\hat{\alpha}^{\shriek}_t \colon \Emb(\coprod^k S^n,M) \times \Cleav_{S^n}(-;k) \to \colim_{C(\Gamma) \in \mathfrak{C}} \Map(S^n,M)^{TM(C(\Gamma))}
$$

This map is defined as $\alpha^{\shriek}$, but with a homotopy of the scaling factor $S_{ij}^b$ in (\ref{scalingfactor}) via the following:

$$
[0,1] \ni t \mapsto \frac{\dist(\gamma(x_i),\gamma(x_j))}{\varepsilon\left((1-t)\left(\inf_{y \in \coprod^k S^n} \delta_y\right) + t\right)} := S_{ij}^b(t).
$$

Where $S_{ij}^b(0) = S_{ij}^b$, meaning that $\hat{\alpha^{\shriek}}_0 = \alpha^{\shriek}$. At $t=1$ we have $S_{ij}^b(1)$ which gives the modified umkehr map $\hat{\alpha}^{\shriek} := \hat{\alpha}^{\shriek}_t$. To $[T,\underline{P}] \in \Cleav_{S^n}(-;k)$ with outgoing colour $N_1,\ldots,N_k$, the modified umkehr map will have $\hat{\alpha}^{\shriek}(\complement N_i)$ indpendent of where $\hat{\alpha}^{\shriek}(N_j)$ is as a subset of $M$. This means that there will be a potential intersection of loops, so $\hat{\alpha^{\shriek}}$ will land in the Thom space over $\Map(S^n,M)$ which is defined completely analogously to $\Emb((S^n,\beta^{\thick}),M)^{TM(C(\Gamma))}$, with the fibered Thom spectrum given in the colimit being fibered over $\Map(\coprod_{i=1}^k N_i,M)$.

We use this deformed umkehr map from embedding spaces to extend it to a map from mapping spaces, as is the case for ordinary string topology:

$$\alpha^{\shriek \shriek} \colon \Map(\coprod^k S^n,M) \to \colim_{C(\Gamma) \in \mathfrak{C}} \Map((S^n,\beta^{\thick}),M)^{\widehat{TM(C(\Gamma))}}$$

To describe the target, consider $f \in \Map(\coprod^k S^n,M)$, and take the subset $U_f \subset \coprod^k S^n$ where $f(x) = f(y)$ for $x,y$ in different components of $\coprod_{i=1}^k \complement N_i$, where $N_1,\ldots,N_k$ are the outgoing colours of $[T,\underline{P}] \in \Cleav_{S^n}(-;k)$.

The map $\alpha^{\thick}_{[T,\underline{P}]}$ maps the subset $U_f$ onto the blueprint $\beta_{[T,\underline{P}]}^{\thick}$, and we can form the homotopy Thom spaces 

$$
\Map((S^n,\beta^{\thick}_{[T,\underline{P}]} \setminus \alpha_{[T,\underline{P}]}^{\thick}(U_f)),(M,TM))/\Map((S^n,\beta^{\thick}_{[T,\underline{P}]} \setminus \alpha_{[T,\underline{P}]}^{\thick}(U_f)),(M,CTM))
$$

In this sense, when $u \in U_f$, we require $f(u) \in M$ considered as the zero-section of $TM$.

Let $s(\beta^{\thick}_{[T,\underline{P}]} \cap S^n)$ denote the space of open subspaces of $\beta^{\thick}_{[T,\underline{P}]} \cap S^n$ including itself and $\emptyset$, and endow this with the Vietoris topology. Extending $s(\beta^{\thick}_{[T,\underline{P}]})$ over $[T,\underline{P}] \in C(\Gamma)$ gives a topology to 
$$\Map(S^n,M)^{\widehat{TM(\Gamma)}} := \Map(S^n,M)^{\widehat{TM(\Gamma)}_{U_f}} \rtimes s(C(\Gamma)).$$

The map $\alpha^{\shriek \shriek}$ is given by noting that for self-intersections occurring only in the image of the same component of $\coprod^k S^n$, we can extend the map $\hat{\alpha}^{\shriek}$ directly to mapping spaces since the definition of the umkehr map $\hat{\alpha}^{\shriek}$ is only affected by different components. We extend to points of $U_f$ where it is not defined by letting $$\alpha^{\shriek \shriek}(f)(u) = f(u)$$ for $u,u' \in U_f$ points with $f(u) = f(u')$. 

What the above has produced for us can be summarized in the following diagram

\begin{eqnarray}\label{homotopydefect}
\xymatrix{
\Emb(\coprod^k S^n,M) \times \Cleav_{S^n}(-;k) \ar[r]^{\alpha^{\shriek}}\ar@{=}[d] & \colim_{C(\Gamma) \in \mathfrak{C}} \Map(S^n,M)^{TM(C(\Gamma))}\ar[d]\\
\Emb(\coprod^k S^n,M) \times \Cleav_{S^n}(-;k) \ar[r]^{\hat{\alpha}^{\shriek}}\ar[d] & \colim_{C(\Gamma) \in \mathfrak{C}} \Map(S^n,M)^{TM(C(\Gamma))}\ar[d]\\
\Map(\coprod^k S^n,M) \times \Cleav_{S^n}(-;k) \ar@{-->}[r]^{\alpha^{\shriek \shriek}} & \colim_{C(\Gamma)\in \mathfrak{C}} \Map(S^n,M)^{\widehat{TM(C(\Gamma))}}
}
\end{eqnarray}

By construction, the top square is commutative up to homotopy. The lower square is commutative, but the effect on homotopy in the lower square is harder to compute due to the nature of $\colim_{C(\Gamma) \in \mathfrak{C}}\Map(S^n,M)^{\widehat{TM(C(\Gamma))}}$, which is not a colimit of homotopy Thom Space. We do however have the following:

\begin{proposition}\label{dimension1nottoobad}
Specifying $\Map(-,-)$ as the bifunctor of smooth maps, the space $\Map(S^1,M)^{\widehat{TM}(C(\Gamma))}$ is homotopy equivalent to $\Map(S^1,M)^{TM(C(\Gamma))}$
\end{proposition}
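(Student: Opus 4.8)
The plan is to exhibit an explicit homotopy equivalence by producing a deformation retraction of $\Map(S^1,M)^{\widehat{TM}(C(\Gamma))}$ onto (a subspace homeomorphic to) $\Map(S^1,M)^{TM(C(\Gamma))}$, using that in dimension $1$ the ``bad set'' $U_f$ is particularly tame. Recall that the difference between the two targets is entirely concentrated in what happens over the self-intersection locus: $\Map(S^1,M)^{TM(C(\Gamma))}$ is the honest homotopy Thom space (in which the Thom collapse is performed exactly as in the embedding case, pointwise, possibly with continuity trouble at self-intersections), whereas $\Map(S^1,M)^{\widehat{TM}(C(\Gamma))}$ is built by \emph{excising} the set $U_f \subset \coprod^k S^1$ of points $x$ with $f(x)=f(y)$ for $y$ in a different component of $\coprod_i \complement N_i$, gluing in the extra Vietoris-topologised data $s(C(\Gamma))$ to record which open subset of $\beta^{\thick}\cap S^1$ has been removed. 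So the statement amounts to: over $S^1$, this excision does not change the homotopy type.

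The key geometric input is that for a smooth $f\colon \coprod^k S^1 \to M$, the preimage $\alpha_{[T,\underline P]}^{\thick}(U_f) \subset \beta^{\thick}_{[T,\underline P]}$ that gets deleted is a union of (relatively) open sub-arcs of the $1$-dimensional blueprint $\beta^{\thick}_{[T,\underline P]}\cap S^1$ — as the excerpt notes in the introduction, ``the subspaces of self-intersection are given by subsets of line-segments which are contractible as long as the maps in question are smooth.'' First I would make this precise: for a generic/smooth $f$, two branches $\gamma_i(\complement N_i)$, $\gamma_j(\complement N_j)$ meet along a closed subset of a $1$-manifold, which is a disjoint union of points and closed intervals; passing to the complement inside the relevant $1$-simplex of the thickened blueprint replaces each such component by its complement, and the \emph{closure} of each removed piece is an interval (or a point). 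The relative Thom space over an interval-complement deformation retracts back onto the Thom space over the full interval, because over a contractible base the Thom collapse map can be linearly interpolated: one scales the fibre radius from the ``evaded'' value back up to $1$ across the missing arc, pulling the $\infty$-collapsed region back in. This is the heart of the argument, and it works precisely because the missing region is $1$-dimensional and contractible, so there is no obstruction to filling it in consistently.

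Concretely I would proceed in four steps. (1) Fix $[T,\underline P]\in C(\Gamma)$ and work fibrewise over $\Map(\coprod_i N_i, M)$: show that for each $f$, the Thom space $\Map((S^1, \beta^{\thick}\setminus \alpha^{\thick}(U_f)),(M,TM))/(\cdots CTM)$ is homotopy equivalent to $\Map((S^1,\beta^{\thick}),(M,TM))/(\cdots CTM)$ by a deformation retraction that ``closes up'' each removed arc — parametrise the removed open sub-arc by $(0,1)$ and, as in the tubular-cigar formula (\ref{tubularcigar}), rescale the fibre data by a factor that tends to the evaded value at the endpoints and to $1$ in the interior, linearly homotoped to the constant map. (2) Check this homotopy is compatible with the $s(C(\Gamma))$-coordinate: as the removed open set shrinks to $\emptyset$ in the Vietoris topology, the retraction degenerates continuously to the identity on $\Map(S^1,M)^{TM(C(\Gamma))}$, so the fibrewise homotopies assemble to a global homotopy over the augmented base $\Map(S^1,M)^{\widehat{TM}(C(\Gamma))}\rtimes s(C(\Gamma))$. (3) Verify that the construction respects the correspondences (\ref{cleavingpushout}) defining $\mathfrak C$ — i.e. the retraction commutes with $\phi_{\in}$ and $\phi_{\evade}$ up to the fibered suspensions of (\ref{suspensiondiagram}) — so that it descends to the colimit; this uses Mayer–Vietoris and the five-lemma exactly as in the proof of Theorem \ref{globalmap}. (4) Conclude that $\Map(S^1,M)^{\widehat{TM}(C(\Gamma))}\simeq \Map(S^1,M)^{TM(C(\Gamma))}$ as stated, noting that since we only deform fibre data the identification is compatible with the projections to $\Map(S^1,M)$ and hence induces the Thom isomorphism in the same degree.

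The main obstacle I expect is \textbf{continuity at the boundary of the removed region}, i.e. making the ``closing-up'' homotopy genuinely continuous as $f$ varies and as the self-intersection arcs appear, merge, or vanish — precisely the continuity failure that motivated introducing $\widehat{TM}$ in the first place. The resolution is that the scaling factor $S_{ij}^b$ from (\ref{scalingfactor}), which blows up like $1/\delta_y$ as a branch approaches an intersection, is exactly what forces the collapse to $\infty$ near the boundary arc, so the retraction that reinstates the arc must correspondingly rescale \emph{down} from $\infty$; one has to check that the homotopy $S_{ij}^b(t)$ already introduced for $\hat\alpha^{\shriek}$ (the interpolation $(1-t)\inf\delta_y + t$ in the denominator) provides exactly the needed interpolation, so that no new continuity issue is created — the smoothness of $f$ guarantees $\inf_y \delta_y$ varies continuously and vanishes only along the codimension-$0$ (in the $1$-dimensional blueprint) interval locus, never worse. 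A secondary technical point is handling the Vietoris topology on $s(\beta^{\thick}\cap S^1)$ carefully enough that the global homotopy is continuous there; this is routine once one observes that in dimension $1$ the open subsets in question are finite unions of open sub-arcs, so $s(\beta^{\thick}\cap S^1)$ is itself built from simplices and the Vietoris topology agrees with the evident CW structure.
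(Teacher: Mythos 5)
Your proposal rests on the same key geometric observation as the paper — that in dimension $1$ the self-intersection loci are contractible subsets of arcs — but takes a genuinely different route to turn that observation into a homotopy equivalence. You propose a \emph{direct} fibrewise deformation retraction of $\Map(S^1,M)^{\widehat{TM}(C(\Gamma))}$ onto $\Map(S^1,M)^{TM(C(\Gamma))}$, filling the excised arcs back in by rescaling fibre data. The paper instead introduces an intermediate object: for each smooth $f$ it isolates the bad sets $A^f_i \subset \complement N_i$, chooses tubular neighborhoods $t(A^f_i)$ on which $f$ is within $\varepsilon$ of self-intersecting, and passes to the quotient $\Map(\coprod^k S^1,M)/\sim$ (together with the corresponding quotient Thom space $(\Map((S^1,\beta^{\thick}),M)/\sim)^{TM(C(\Gamma))}$) where maps agreeing on these neighborhoods are identified. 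Contractibility of the $A^f_i$ then yields that the quotient map is a homotopy equivalence, and a commutative square relating $\alpha^{\shriek\shriek}$ to a further map $\alpha^{\shriek\shriek\shriek}$ finishes the argument. The advantage of the paper's approach is that collapsing a tubular neighborhood of the bad locus sidesteps precisely the boundary-continuity issue you flag as the main obstacle: one never has to ``pull the $\infty$-collapsed region back in'' across the frontier of the removed arc, because that frontier is simply absorbed into the quotient. Where I think your proposal has a gap is in the proposed resolution of that obstacle: the interpolation $S^b_{ij}(t)$ with denominator $(1-t)\inf_y\delta_y + t$ is a homotopy of the \emph{umkehr maps} $\alpha^{\shriek}\rightsquigarrow\hat\alpha^{\shriek}$ (both defined on $\Emb$), not a deformation of the target Thom space; it does not obviously supply the fibrewise retraction you want, since the continuity failure near the boundary of $\alpha^{\thick}(U_f)$ lives in the Vietoris/$\widehat{TM}$ structure of the target, not in the choice of scaling of the umkehr map. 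If you want to carry through a direct deformation retraction rather than the paper's quotient, you would need to produce a genuinely new homotopy on the Thom space side that interpolates the excised arc away uniformly in $f$ and in the Vietoris coordinate; short of that, the quotient construction is the cleaner path.
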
 
\begin{proof}
Fix $[T,\underline{P}] \in C(\Gamma) \subset \Cleav_{S^1}(-;k)$. This defines the subset of complements of the outgoing colours: $\complement N_1,\ldots,\complement N_k$. Given $f \in \Map(\coprod^k S^1,M)$, let $d$ denote the same metric as used in the definition of the umkehr map $\alpha^{\shriek}$, and let

$$
A^f_i := \{x \in \complement N_i \mid \exists j \neq i, \exists y \in \alpha_{[T,\underline{P}]}^{-1}\left(\alpha_{[T,\underline{P}]}(x)\right) \cap \complement N_j, f(y)=f(x)\}
$$

Since $f$ is a smooth map, there is a tubular neighborhood $t(A_i^f) \supset A_i^f$ inside $\complement N_i$ such that in this neighborhood, the $d(f(y),f(x)) < \varepsilon$ for $x \in \complement N_i$ and $y \in \alpha^{-1}(\alpha(x)) \cap \complement N_j$. 

We impose an equivalence relation on $\Map(\coprod^k S^1,M)$ by letting $f' \sim f$ when $f'(x) = f(x)$ for $x \in \bigcup_{i=1}^k t(A_i^f)$. 

Imposing this equivalence relation for any $f \in \Map(\coprod^k S^1,M)$ yields a quotient space $\Map(\coprod^k S^1,M)/\sim$, which we can identify as having the subspaces $A^f_i$ of the domain collapsed to a single point. In effect, we can identify $\Map(\coprod^k S^1,M)/\sim$ as the space of maps from $\coprod^k S^1 \to M$ where, restricted to $\coprod_{i=1}^k \complement N_i$, the components only diverge by more than $\varepsilon$ at isolated points.

Since the spaces $A^f_i$ are contractible subsets of $S^1$, the quotient map $\Map(\coprod^k S^1,M) \to \Map(\coprod^k S^1,M)/\sim$ is a homotopy equivalence.

We define the homotopy Thom space of the quotient $\Map(\coprod^k S^1,M)/\sim$ by noting that $\alpha_{[T,\underline{P}]}\left(\bigcup_{i=1}^k \complement t(A_i^f)\right) \subset \beta_{[T,\underline{P}]}$ is defined such that this is precisely the region where $f$ is further than $\varepsilon$ away from self-intersecting, this defines similarly a quotient space on the relative mapping space $\Map((S^n,\beta^{\thick}),(M,TM))$, where we identify maps that are further than $\varepsilon$ away at $\alpha\left(\bigcap_{i=1}\complement t(A_i^f)\right)$. This allows us to define the relative mapping space 

\begin{eqnarray*}
\left(\Map((S^1,\beta^{\thick}),M)/\sim\right)^{TM(C(\Gamma))} := \\
\left(\Map((S^1,\beta^{\thick})_{C(\Gamma)},(M,TM))/\sim\right)/\Map((S^n,\beta^{\thick})_{C(\Gamma)},(M,CTM))/\sim
\end{eqnarray*}

From $\Map(\coprod^k S^1,M)/\sim$ we can construct an umkehr map that fits into a commutative diagram as follows:

$$
\xymatrix{
\Map(\coprod^k S^1,M)\ar[r]^(.35){\alpha^{\shriek \shriek}}\ar[d]^{\sim} & \Map((S^1,\beta^{\thick}),M)^{\widehat{TM(C(\Gamma))}}\ar[d]\\
\Map(\coprod^k S^1,M)/\sim\ar[r]^(.4){\alpha^{\shriek \shriek \shriek}} & (\Map((S^1,\beta^{\thick}),M)/\sim)^{TM(C(\Gamma))}
}
$$

We define the map $\alpha^{\shriek \shriek \shriek}$ by noting that under the equivalence relation $\sim$, we can take representatives of the mappings $f \colon \coprod^k S^1 \to M$ such that restricted to $\complement N_1\coprod \cdots \coprod \complement N_k$ they are within $\varepsilon$ of each other. We define $\alpha^{\shriek \shriek \shriek}(x) = \alpha^{\shriek \shriek}(x)$ for $x \notin t(A_i^f)$. In particular, when $t(A_i^f)= \emptyset$, $f$ is mapped to the point at $\infty$. Whenever $t(A_i^f) \neq \emptyset$ we can fix a bump-function and use this to parametrize along the deformation retraction from $t(A_i^f)$ to $A_i^f$ to extend $\alpha^{\shriek \shriek}$, such that $f$ are mapped to the $0$-section in $TM$ along $A_i^f$. Hence providing a continuous map into $\Map((S^1,\beta^{\thick}),M)/\sim^{TM(C(\Gamma))}$.

That $\Map((S^1,\beta^{\thick}),M)/\sim^{TM(C(\Gamma))}$ and $\Map((S^1,\beta^{\thick}),M)^{TM(C(\Gamma))}$ are homotopy equivalent follows by contractibility of the Vietoris topology we have imposed, and since the quotient map $\Map((S^1,\beta^{\thick},M) \to \Map((S^1,\beta^{\thick}),M)/\sim$ is a homotopy equivalence.
\end{proof}

As a consequence of the above, we obtain the following morphism of string topology for embedding spaces into string topology for mapping spaces:

\begin{theorem}\label{bvalgebramor}
Taking homology of the above spectrum leads to morphism of BV-algebras
$$
\xymatrix{
\hh_*(\Emb(\coprod^k S^1,M) \times \Cleav_{S^1}(-;k))\ar[r]^{\alpha^{\shriek}_*}\ar[d] & \hh_{*-(k-1)\dim(M)}(\Emb(S^1,M))\ar[d]\\
\hh_*(\Map(\coprod^k S^1,M) \times \Cleav_{S^1}(-;k))\ar[r]^{\alpha^{\shriek \shriek \shriek}_*} & \hh_{*-(k-1)\dim(M)}(\Map(S^1,M))
}.
$$

Here, the lower morphism is the usual BV-algebra structure in string topology.

\end{theorem}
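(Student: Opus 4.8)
The plan is to pass to homology in the diagram (\ref{homotopydefect}), identify the resulting homology operations as BV-algebra structures, and deduce commutativity of the square from the homotopy-commutativity already built into (\ref{homotopydefect}).

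First, assembling the action maps of Theorem \ref{globalmap} over all arities $k$ and using the operadic compatibility of Remark \ref{operadicaction}, the umkehr maps $\alpha^{\shriek}$ equip $\hh_*(\Emb(S^1,M))$ with the structure of an algebra over the homology of the coloured operad $\Cleav_{S^1}$. Since $\Timber_{S^1}$ is contractible, a homotopy-invariant action of $\Cleav_{S^1}$ is the same as an $E_2$-action, and together with the rotation action of $S^1$ on $\Emb(S^1,M)$ by reparametrising the source circle this yields, after the Thom degree shift, a BV-algebra. The same argument with $\Emb$ replaced by $\Map$ applies to $\alpha^{\shriek \shriek \shriek}$; here Proposition \ref{dimension1nottoobad} is essential, since it identifies the target $\colim_{C(\Gamma)\in\mathfrak{C}}\Map((S^1,\beta^{\thick}),M)^{\widehat{TM(C(\Gamma))}}$ with the colimit of honest homotopy Thom spaces $\colim_{C(\Gamma)\in\mathfrak{C}}\Map((S^1,\beta^{\thick}),M)^{TM(C(\Gamma))}$, so that the Thom isomorphism of Proposition \ref{homotopythomiso} applies and produces $\alpha^{\shriek \shriek \shriek}_*$ with the degree shift $-(k-1)\dim(M)$. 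One then checks that the resulting BV-structure on $\hh_*(\Map(S^1,M))$ is the classical Chas--Sullivan one: a cleavage of $S^1$ has blueprint a finite set of points at which the arcs $N_i$ are glued, the geodesic-tube umkehr formula (\ref{umkehrformula}) specialises at such points to the Thom collapse along the diagonal of $M$, and the $E_2$-action of $\Cleav_{S^1}$ on the free loop space recovers the loop product.

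The vertical maps of the asserted square are induced by the inclusion $\Emb(S^1,M)\hookrightarrow\Map(S^1,M)$, and on the sources by $\Emb(\coprod^kS^1,M)\hookrightarrow\Map(\coprod^kS^1,M)$ with the identity on the factor $\Cleav_{S^1}(-;k)$. This inclusion is $S^1$-equivariant for the rotation actions and compatible with the restriction maps defining the cleavage action, hence a morphism of algebras over $\Cleav_{S^1}$ commuting with the BV operator; so the vertical maps are morphisms of BV-algebras provided the square commutes. Commutativity on homology follows from (\ref{homotopydefect}): its top square commutes up to homotopy by construction and its bottom square commutes strictly, so the outer rectangle commutes up to homotopy, and $\hh_*$, being a homotopy functor, turns it into a commutative square. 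Both colimit targets appearing in (\ref{homotopydefect}) map compatibly to $\hh_{*-(k-1)\dim(M)}(\Map(S^1,M))$ via the restriction-to-$S^1$ maps composed with the Thom isomorphisms, the right-hand vertical of (\ref{homotopydefect}) being compatible with these identifications by Proposition \ref{dimension1nottoobad}; collapsing both targets accordingly converts the outer rectangle into the square of the statement. The sign discrepancy for the $\Sigma_k$-action recorded in Remark \ref{operadicaction} is the same on both rows, being a consequence of the signed symmetry (C) of Proposition \ref{shriekproperties}, and is therefore respected by the vertical maps.

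The main obstacle is the identification of the bottom row with the classical Chas--Sullivan BV-algebra: one must match the geodesic-tube Thom collapse construction, specialised to the one-dimensional domain where the thickened blueprint degenerates, with the usual umkehr map built from a tubular neighbourhood of the diagonal, and match the $E_2$-action of $\Cleav_{S^1}$ on the free loop space with the little $2$-disks action underlying the loop product. Proposition \ref{dimension1nottoobad} is the technical device that makes this comparison possible; the remaining steps are bookkeeping with Thom isomorphisms and with the homotopy-commutativity of (\ref{homotopydefect}).
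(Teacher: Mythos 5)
Your proof takes a genuinely different route from the paper, and that route has a gap at the crucial BV step. The paper's own proof is very short: it cites \cite[5.21]{TarjeCleavage} directly for the statement that $\Cleav_{S^1}$ is an operad whose homology actions produce BV-algebras, and then appeals to the homology computation already performed in the proof of Theorem \ref{globalmap} (Mayer--Vietoris over the correspondences $C(\Gamma) \leftarrow C(\Gamma,\Gamma') \to C(\Gamma')$, Thom isomorphisms, degree bookkeeping). You instead try to reconstruct the BV structure from first principles: $\Cleav_{S^1}$ is an $E_2$-operad up to homotopy, and you add the circle rotation action on the source $S^1$ and declare the result to be a BV-algebra "after the Thom degree shift." This step is not justified. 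An $E_2$-algebra equipped with an arbitrary $S^1$-action is a Gerstenhaber algebra with an extra degree-$1$ operator; to get a BV-algebra you must verify the BV axiom relating $\Delta$, the product, and the bracket, which holds precisely when the two structures combine into a framed $E_2$ (equivalently, cacti) operad action. Nothing in your argument shows that the cleavage operad's $E_2$-structure and the rotation action are compatible in this sense. The paper sidesteps this issue entirely by citing a result that already packages the BV compatibility into $\Cleav_{S^1}$; you should do the same, or else establish that $\Cleav_{S^1}$ acts as a framed $E_2$-operad.

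Your other additions are reasonable and in some ways more explicit than the paper. The paper's proof does not spell out how commutativity of the asserted square follows from diagram (\ref{homotopydefect}), nor how Proposition \ref{dimension1nottoobad} is invoked to identify the lower-right target with honest Thom spaces so that the Thom isomorphism applies; you supply both, and your argument that the outer rectangle of (\ref{homotopydefect}) commutes up to homotopy (top square homotopy-commutative, bottom strictly commutative) and hence on homology is correct. Your identification of the bottom row with the classical Chas--Sullivan structure is asserted at roughly the same level of rigour as the theorem statement itself, which is acceptable, though you present it as the "main obstacle" whereas the paper treats it as a given. In short: your handling of the commutativity and of Proposition \ref{dimension1nottoobad} improves on the paper's terse proof, but the BV-algebra claim itself needs to be sourced from \cite[5.21]{TarjeCleavage} rather than from the incomplete $E_2$-plus-rotation argument you give.
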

It follows from the remark \ref{operadicaction} that for $\sigma \in \Sigma_k$, the induced map of $\alpha_{[T,\underline{P}]}^{\shriek}$ should satisfy
$$
(\alpha^{\shriek}_*)((f_1,\ldots,f_k),(\sigma.[T,\underline{P}])) = \sgn(\sigma)(f_{\sigma(1)},\ldots,f_{\sigma(k)},[T,\underline{P}])
$$
We shall counter the $\sgn(\sigma)$ on the right-side of the equation by making the choices of signs in Thom isomorphisms be given by the sign of the permutation of $\Sigma_k$, and hence depend on the symmetric action as well. This makes the action in the theorem an operadic one.
\begin{proof}
In \cite[5.21]{TarjeCleavage}, it is shown that $\Cleav_{S^1}$ is an operad whose actions on homology provide BV-algebras. 

We can compute the homology of $\colim_{C(\Gamma) \in \mathfrak{C}} \Map(S^1,M)^{TM(C(\Gamma))}$ and the homology of $\colim_{C(\Gamma) \in \mathfrak{C}} \Emb(S^1,M)^{TM(C(\Gamma))}$ by computing the homology of the homotopy Thom spaces in the limit via the same method applied in \ref{globalmap}.

\end{proof}



\pagebreak
\bibliography{referencer}

\end{document}